\newtheorem{thm}{Theorem}[section]
\newtheorem{lem}[thm]{Lemma}
\theoremstyle{definition}
\theoremstyle{remark}
\newcommand{\nequiv}{\not\equiv}
\let\lm=\lambda
\newcommand{\mathmod}[1]{\ \left(\mathrm{mod}\ #1\right)}
\newcommand{\acr}{\newline\indent}
\begin{document}

\title{On a problem of De Koninck}
\author[Tomohiro Yamada]{Tomohiro Yamada*}
\address{\llap{*\,}Center for Japanese language and culture\acr
                   Osaka University\acr
                   562-8678\acr
                   3-5-10, Sembahigashi, Minoo, Osaka\acr
                   JAPAN}
\email{tyamada1093@gmail.com}

\subjclass[2010]{05C20, 11A05, 11A25, 11A41}
\keywords{Sum of divisors, Squarefree core, Radical of an integer,
De Koninck's conjecture, Directed acyclic multigraphs}

\begin{abstract}
Let $\sigma(n)$ and $\gamma(n)$ denote the sum of divisors and
the product of distinct prime divisors of $n$ respectively.
We shall show that, if $n\neq 1, 1782$ and $\sigma(n)=(\gamma(n))^2$,
then there exist odd (not necessarily distinct) primes $p, p^\prime$ and (not necessarily odd)
distinct primes $q_i (i=1, 2, \ldots, k)$ such that
$p, p^\prime\mid\mid n$, $q_i^2\mid\mid n (i=1, 2, \ldots, k)$ with $k\leq 3$ and
$q_1\mid \sigma(p^2), q_{i+1}\mid\sigma(q_i^2) (1\leq i\leq k-1), p^\prime \mid\sigma(q_k^2)$.
\end{abstract}

\maketitle

\section{Introduction}

Let $\sigma(n)$ and $\gamma(n)$ denote the sum of divisors and
the product of distinct prime divisors of $n$, called the {\em radical} of $n$, respectively.
Moreover, let $\omega(n)$ denote the number of distinct prime divisors of $n$.
De Koninck \cite{Kon} posed the problem to prove or disprove that
the only solutions of
\begin{equation}\label{eq10}
\sigma(n)=(\gamma(n))^2
\end{equation}
are $n=1, 1782$.

According to the editorial comment, it is shown that such an integer $n\neq 1, 1782$ must be even,
have at least four prime factors, be neither square-free nor squarefull,
be greater than $10^9$ and have no prime factor raised to a power congruent to $3\mathmod{4}$.
Later, further necessary conditions to satisfy \eqref{eq10} have been shown.
Broughan, De Koninck, K\'{a}tai and Luca \cite{BKKL} showed that,
if an integer $n>1$ satisfies \eqref{eq10}, then
\begin{equation}\label{eq11}
n=2^{e_0} \prod_{i=1}^s p_i^{e_i},
\end{equation}
where $p_i$ are distinct odd primes and $e_i$ are positive integers satisfying
(a) $p_1\equiv 3\mathmod{8}, e_1=1$ and the other $e_i$'s are even,
or
(b) $p_1\equiv p_2\equiv e_1\equiv e_2\equiv 1\mathmod{4}, \min\{e_1, e_2\}=1$ and the other $e_i$'s are even.
Moreover, they showed that $\omega(n)\geq 5$ and $n$ cannot be fourth power free.

Broughan, Delbourgo and Zhou \cite{BDZ} showed that $p_1\geq 43$ in the case (a),
$p_1\geq 173$ in the case (b) with $\alpha_2>\alpha_1=1$
and $n$ must be divisible by the fourth power of an odd prime.
Chen and Tong \cite{CT} showed that if $n\neq 1, 1782$ satisfies \eqref{eq10} with (a),
then $n$ is divisible by $3$ and by the fourth powers of at least two odd primes, $p_1\geq 1571$,
at most two of $p_i$'s are greater than $p_1$,
$e_i=2$ for at least two $i$'s and $e_i=2$ for any $i$ such that $10p_i^2\geq p_1$.
Moreover, they showed that for any $n$ satisfying \eqref{eq10},
at least half of the numbers among $e_i+1$'s $(0\leq i\leq s)$ must be either primes or prime squares.
Tang and Zhou \cite{TZ} showed that no integer $n=2^{e_0} p_1 p_2 p_3^4 p_4^4$ other than $1, 1782$ satisfies \eqref{eq10}.
Furthermore, there exist only finitely many integers of the form \eqref{eq11} satisfying \eqref{eq10}
for any given integer $s$.
More generally, Luca \cite{Luc} showed that, if a positive integer $n$ satisfies $\omega(n)=T$
and $\sigma(n)\mid L(\gamma(n))^K$ with $K, L$ positive integers, then
\[n<\exp \left(((K+\log L)T!)^{2^T}\right).\]

As usual, $p^e\mid\mid n$ denotes that $p^e\mid n$ but $p^{e+1}\nmid n$.
In this paper, we shall give the following new necessary condition for an integer $n$
to satisfy \eqref{eq10}.

\begin{thm}\label{th11}
If an integer $n\neq 1, 1782$ of the form \eqref{eq11} satisfies \eqref{eq10},
then there exist odd (not necessarily distinct) primes $p, p^\prime$ and (not necessarily odd)
distinct primes $q_i (i=1, 2, \ldots, k)$ with $k\leq 3$ such that
$p, p^\prime \mid\mid n$, $q_i^2\mid\mid n (i=1, 2, \ldots, k)$ and
$q_1\mid \sigma(p^2), q_{i+1}\mid\sigma(q_i^2) (i=1, 2, \ldots, k-1), p^\prime \mid\sigma(q_k^2)$.
\end{thm}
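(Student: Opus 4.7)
The plan is to encode the equation $\sigma(n)=\gamma(n)^2$ into a directed multigraph on the odd prime divisors of $n$ and to extract a short chain through those primes with exponent exactly two. First, using conditions (a)/(b) of Broughan--De Koninck--K\'atai--Luca, I would identify the odd primes of $n$ appearing to an odd power: at most one ($p_1$) in case (a) and at most two in case (b). These are precisely the candidates for $p$ and $p'$, with $p=p'$ permitted.

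For each odd prime $q$ with $q^2\|n$, the factor $\sigma(q^2)=q^2+q+1=\Phi_3(q)$ is odd and divides $\sigma(n)=\gamma(n)^2$; hence every prime of $\sigma(q^2)$ divides $\gamma(n)$, and so divides $n$. I would put a directed edge $q\to r$ (with multiplicity $v_r(\sigma(q^2))$) for each such $r$, producing the multigraph signalled by the paper's keywords. The edges $q_{i+1}\mid\sigma(q_i^2)$ and $p'\mid\sigma(q_k^2)$ in the theorem then correspond to following edges of this graph from an initial squared prime $q_1$ through intermediate squared primes until landing on an odd prime $p'$ with $p'\|n$. The existence of such a terminal $p'$ would be forced by the valuation identity $v_{p'}(\sigma(n))=2$ combined with $p'\nmid\sigma(p')=p'+1$, so the $p'$-adic contribution must come from some $\sigma(q^{e_q})$ with $q\neq p'$.

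The delicate step is the \emph{starting} edge $q_1\mid\sigma(p^2)$, since $p\|n$ (not $p^2\|n$) means $\sigma(p^2)=p^2+p+1$ is not a direct factor of $\sigma(n)$. To establish it I would exploit that $\sigma(p)=p+1$ divides $\gamma(n)^2$, use the coprimality $\gcd(\Phi_2(p),\Phi_3(p))=1$, and combine these with the structure of the multigraph to force the existence of some $q_1$ with $q_1^2\|n$ satisfying the cubic congruence $q_1\mid p^2+p+1$, rather than merely the quadratic one $q_1\mid p+1$. This is the main technical hurdle of the argument.

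Finally, the bound $k\leq 3$ should come from acyclicity in a suitable ordering of the vertices: the Chen--Tong constraints (at most two $p_i$ exceed $p_1$, exponent two forced whenever $10p_i^2\geq p_1$, and at least two $i$ with $e_i=2$) control the relative sizes of the $q_i$'s, and a length-three bound emerges by bounding how many steps can occur before the chain must return to a first-power odd prime. The main obstacle is thus the combined justification of the first edge and the sharp length estimate, which together require a careful combinatorial accounting of the $p$-adic contributions to $\sigma(n)=\gamma(n)^2$ across all prime-power divisors of $n$.
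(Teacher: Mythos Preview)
Your multigraph framework is the right starting point, but two key pieces are missing or misidentified. First, your mechanism for producing the chain is insufficient: observing that $v_{p'}(\sigma(n))=2$ gives $p'$ an incoming edge, but it does not force a path that both starts and ends in $L$, nor does it rule out the walk cycling inside $M$ or leaving through primes of exponent $\ge 4$. The paper's engine is entirely different. It proves a path-sum identity on acyclic digraphs (Lemma~\ref{lm21}) and feeds it into a product inequality for $h(C)=\prod_{p_i\in C}\sigma(p_i^{e_i})/p_i^2$ (Lemma~\ref{lm40}): if $N(L)$ were acyclic with every element of $L$ a sink, one would get $h(n)>1$, contradicting $\sigma(n)=\gamma(n)^2$. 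This forces either an arc from $L$ into $N(L)$ or a cycle in $N(L)$; a lengthy case analysis (Lemmas~\ref{lm41}--\ref{lm43} and~\ref{lm51}--\ref{lm53}, with explicit computations for specific primes such as $3,7,13,31,61,97,331,2801,4733,22406023$) then excludes cycles lying entirely in $M(L)$, yielding the $L$-to-$L$ path. Your ``starting edge'' worry and the proposed $\gcd(\Phi_2(p),\Phi_3(p))$ workaround are beside the point: in the paper's graph the arc out of $p\in L$ simply records a prime factor of $\sigma(p)=p+1$ (the graph uses the actual exponent of $p$ in $n$), and its existence into $N(L)$ is exactly what the dichotomy above delivers, with no separate cubic-congruence argument required.

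Second, the bound $k\le 3$ does not come from Chen--Tong size constraints or an acyclicity-by-ordering argument. It is immediate from Lemma~\ref{lm32}: every prime divisor of $q^2+q+1$ is $3$ or $\equiv 1\pmod 3$, so the intermediate $q_i$'s lie in this set, and each $q_i\equiv 1\pmod 3$ contributes a factor of $3$ to $\sigma(n)$ via $3\mid\sigma(q_i^2)$. Since $3^3\nmid\gamma(n)^2=\sigma(n)$, at most two of them can be $\equiv 1\pmod 3$, which bounds the length of the chain by~$3$.
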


Our idea is based on the following simple observation, which has been used by previous authors
mentioned above.
For example, consider the special case $e_i=1$ only for $i=1$, $q_1\mid \sigma(p^2)$ for two primes $p$ and
for each $p$, $p\mid \sigma(q_i^{e_i})$ with $e_i\geq 4$ for two primes $q_i$.
Now we have $\sigma(q_i^{e_i})/q_i^2>\sqrt{\sigma(q_i^{e_i})}>p^{1/2}>q_1^{1/4}$ for each $i$.
Hence, $((q_1+1)/q_1^2) \prod_i \sigma(q_i^{e_i})/q_i^2>q_1(q_1+1)/q_1^2>1$
and $\sigma(n)/(\gamma(n))^2>1$, which contradicts \eqref{eq10}.
In order to generalize this observation,
we introduce a directed multigraph related to prime power divisors of $n$.

In the next section, we introduce some basic terms on directed multigraphs and prove
an identity on directed multigraphs.
In Section 3, we introduce a certain directed multigraph related to prime power divisors of $n$
satisfying \eqref{eq10} and give the key lemma for our proof
as well as some arithmetic preliminaries.

Under our settings described in Sections 2 and 3, we shall prove the following theorem.

\begin{thm}\label{th12}
Let $n\neq 1, 1782$ be an integer of the form \eqref{eq11} satisfying \eqref{eq10}
and $L$ be the set of odd prime divisors $q_i$'s with $e_i=1$.
Let $G(n), N=N(L), M=M(L), B=B(L)$ and $C=C(L)$ be directed multigraphs or sets defined in Section 3.
Then,
\begin{itemize}
\item[i)] If $q_{k+1}\rightarrow q_k\rightarrow \cdots \rightarrow q_1\rightarrow p$
is a path from a vertex $q_{k+1}$ in $B$ to a vertex $p$ in $L$ via vertices in $M$,
then $k\leq 3$ and $q_i\equiv 1\mathmod{3}$ for $1\leq i\leq k-1$.
\item[ii)] $M$ contains at most two primes $\equiv 1\mathmod{3}$.
Furthermore, $\# M\leq 6$ if $\# L=1$ and $\# M\leq 8$ if $\# L=2$.
\item[iii)] There exists a path from $q_i$ in $L$ to $q_j$ in $L$
consisting of vertices $q_l\in N$ other than $q_i, q_j$,
where $q_i$ and $q_j$ may be the same prime.
\end{itemize}
\end{thm}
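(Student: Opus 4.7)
The plan combines the identity of Section~2 and the key lemma of Section~3 with elementary cyclotomic divisibility. Using the arrow convention in which $p\to q$ means $q\mid\sigma(p^{e_p})$, a path $q_{k+1}\to q_k\to\cdots\to q_1\to p$ with $q_{k+1}\in B$, intermediate $q_i\in M$, and $p\in L$ translates to the divisibilities $q_k\mid\sigma(q_{k+1}^{e_{k+1}})$ with $e_{k+1}\ge 4$, $q_i\mid\sigma(q_{i+1}^2)=q_{i+1}^2+q_{i+1}+1$ for $1\le i\le k-1$, and $p\mid\sigma(q_1^2)$.

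For part~(i), the congruence claim is local to each intermediate edge. Since $q_i\mid q_{i+1}^3-1$ with $\gcd(q_i,q_{i+1})=1$, the multiplicative order of $q_{i+1}$ modulo $q_i$ is $1$ or $3$. Order~$1$ forces $q_i\mid 3$, hence $q_i=3$; this case is ruled out by the placement of $3$ dictated by the definitions of Section~3. So the order is exactly~$3$ and $q_i\equiv 1\pmod 3$. For the bound $k\le 3$, I iterate $q_i<q_{i+1}^3$ to obtain $p<q_{k+1}^{e_{k+1}\cdot 3^{k-1}}$, and compare this with the lower bound on~$p$ supplied by the key lemma, which reflects the fact that the identity of Section~2 forces part of the large factor $q_{k+1}^{\,e_{k+1}}$ contributed by $\sigma(q_{k+1}^{e_{k+1}})/q_{k+1}^{\,2}$ to be absorbed on the $L$-side. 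The resulting inequalities become incompatible once $k\ge 4$.

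For part~(ii), every prime of $M$ congruent to $1\pmod 3$ occurs, by~(i), as an intermediate vertex of some $B$-to-$L$ path of length at most $k+1\le 4$. The identity of Section~2 bounds the in-degree at each vertex of $L$ by~$2$, so at most $2\#L$ such paths terminate in~$L$; each furnishes at most $k-1\le 2$ intermediate positions subject to $\equiv 1\pmod 3$, and a finer accounting with the key lemma compresses this total to~$2$. The remaining primes of $M$ satisfy $q\equiv 2\pmod 3$ and must, by~(i), sit adjacent to a vertex of~$B$; bounding $\#B$ through the identity yields $\#M\le 6$ when $\#L=1$ and $\#M\le 8$ when $\#L=2$.

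For part~(iii), the factor $\sigma(q)=q+1$ from each $q\in L$ must distribute its prime content onto $\gamma(n)^2$, so each $q_i\in L$ has at least one outgoing edge in $G(n)$. Following such an edge through the vertices of $N$, the walk cannot exit into $M$, $B$, or~$C$ without contradicting the identity of Section~2 or the constraints of parts~(i)--(ii); hence it must close at some $q_j\in L$, possibly equal to $q_i$. The main obstacle is the quantitative step in part~(i) excluding $k\ge 4$: the exponential chain bound on~$p$ must be reconciled sharply with the lower bound produced by the identity of Section~2, leaving little slack even under the structural constraints that~(ii) ultimately helps supply.
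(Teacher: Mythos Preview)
Your proposal inverts the difficulty of the three parts and misses the actual mechanism behind (i) and (ii).

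For (i), the bound $k\le 3$ does not come from size comparisons. Once Lemma~\ref{lm32} gives $q_i\equiv 1\pmod 3$ for $1\le i\le k-1$, the second half of that lemma yields $3\mid \sigma(q_i^2)=q_i^2+q_i+1$ for each such $q_i$. The $q_i$ are distinct with $q_i^2\mid\mid n$, so $3^{k-1}\mid\prod_{i=1}^{k-1}\sigma(q_i^2)\mid\sigma(n)=\gamma(n)^2$, forcing $k-1\le 2$. That is the entire argument. Your proposed chain $q_i<q_{i+1}^3$ combined with a putative lower bound on $p$ from Lemma~\ref{lm40} does not close: Lemma~\ref{lm40} bounds $h(C)$ from below, and since $h(n)=1$ this only yields an upper bound of the form $\prod_{p_i\in B}p_i^{e_i/2-2}\le 1$ on the boundary contribution, not a lower bound on any individual $p\in L$ that could be played against $q_{k+1}^{3^{k-1}}$. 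Likewise, ``the placement of $3$ dictated by the definitions of Section~3'' does not exclude $q_i=3$; nothing in those definitions prevents $3\in M$.

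For (ii), the same observation $3^3\nmid\sigma(n)$ gives $g_1\le 2$ in one line. The paper then counts in-arrows: every prime of $M$ sends an arc in $N$ to a prime of $L$ or to a prime $\equiv 1\pmod 3$ in $M$, and each such target has in-degree exactly $2$ because $\sigma(n)=\gamma(n)^2$. Hence $g_1+g_2\le 2(g_1+\#L)$, giving $\#M\le 2+2(1+\#L)$. No ``finer accounting with the key lemma'' is needed or available.

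For (iii), your sketch is not a proof and the direction of attack is wrong. An outgoing edge from $q_i\in L$ lands on a prime divisor of $q_i+1$, and nothing forces that prime to lie in $N$: the definitions of $N,M,B,C$ constrain \emph{incoming} paths to $L$, not outgoing ones, so the claim that ``the walk cannot exit'' is unjustified. The paper proves (iii) by contradiction: assuming no $L$-to-$L$ path through $N$, Lemmas~\ref{lm41}--\ref{lm43} show that $N$ cannot be acyclic, and Lemmas~\ref{lm51}--\ref{lm53} show that $M$ cannot contain a cycle, so any cycle in $N$ must pass through $L$. Each of these six lemmas is a substantial numerical case analysis (tracking specific primes $3,7,11,13,19,31,43,61,97,331,2801,\dots$ and their $\sigma$-values), repeatedly deriving either $h(n)>1$ via Lemma~\ref{lm40} or $p^3\mid\sigma(n)$ for some prime $p$. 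This computational core---which occupies Sections~4 and~5---is the real content of the theorem, and it cannot be replaced by the soft argument you outline.
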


Now Theorem \ref{th11} is an arithmetic translation of iii) of Theorem \ref{th12}.
In Sections 4 and 5, we prove that
the directed multigraph related to prime power divisors of $n$ defined in Section 3
cannot have some forms, which yields iii) of Theorem \ref{th12}.
Other statements of Theorem \ref{th12} easily follow from an elementary divisibility property of
values of $\sigma(p^2)$ with $p$ prime.

\section{An identity on directed multigraphs}

Before stating our result on directed multigraphs,
we would like to introduce some basic terms on directed multigraphs according to \cite{BJG}
with some modifications.
A {\em directed multigraph} $G=(V, A)$ consists of a set $V$ of elements called {\em vertices}
and a multiset $A$, where an element may be contained more than once,
of ordered pairs of distinct elements in $V$ called {\em arcs}.
$V=V(G)$ and $A=A(G)$ are called the vertex set and the arc set of $G$ respectively.
For an arc $(u, v)$ in $A$, which we call an arc from $u$ to $v$,
the former vertex $u$ and the latter vertex $v$ are called its {\em tail} and its {\em head} respectively.
We often write $u\rightarrow v$ if $(u, v)\in A$
and $u\overset{k}{\rightarrow} v$ if $(u, v)\in A$ exactly $k$ times.

The {\em subgraph} of $G=(V, A)$ spanned by a given set of vertices $S\subset V$ is
the directed multigraph whose vertex set is $S$ and whose arc set consists of all arcs in $A$
whose tail and head both belong to $S$.

A {\em walk} $(a_1, a_2, \ldots, a_k)$ of length $k$
is a sequence of arcs $a_i=(u_i, v_i) (i=1, 2, \ldots, k)$
such that $v_i=u_{i+1}$ for all $i=1, 2, \ldots, k-1$.
A walk $(a_1, a_2, \ldots, a_k)$ with $a_i=(u_i, u_{i+1}) (i=1, 2, \ldots, k)$
is called a {\em path} if $u_1, u_2, \ldots, u_{k+1}$ are all distinct
and a {\em cycle} if $u_1, u_2, \ldots, u_k$ are all distinct and $u_1=u_{k+1}$.
A walk $(a_1, a_2, \ldots, a_k)$ with $a_i=(u_i, u_{i+1}) (i=1, 2, \ldots, k)$
is often written as $u_1\rightarrow u_2\rightarrow \cdots \rightarrow u_{k+1}$.
A directed multigraph $G=(V, A)$ is called {\em acyclic} if $A$ contains no cycle.

The {\em out-degree} $d^+(v)=d_G^+(v)$ and the {\em in-degree} $d^-(v)=d_G^-(v)$ of the vertex $v$
are the number of arcs from $v$ and to $v$ respectively counted with multiplicity.
A vertex $v$ is called a {\em sink} if $d^+(v)=0$
and a {\em source} if $d^-(v)=0$.
$S(G)$ denotes the set of sources of the directed multigraph $G$.

Now we would like to state our identity.

\begin{lem}\label{lm21}
Let $G$ be a directed acyclic multigraph.
Then, for any vertex $v_0$ of $G$ with $d^-(v_0)>0$,
\begin{equation}\label{eqa1}
\sum_{\substack{P: v_k\rightarrow v_{k-1}\rightarrow\cdots \rightarrow v_0\subset G, \\v_k\in S(G)}} \frac{1}{\prod_{i=0}^{k-1} d_G^-(v_i)}=1.
\end{equation}
\end{lem}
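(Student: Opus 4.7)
The plan is to prove Lemma \ref{lm21} by induction along a topological ordering of $G$. Concretely, for each vertex $v$ of $G$ I would introduce
\[
 f(v) := \sum_{\substack{P\colon v_k\to v_{k-1}\to\cdots\to v_0=v \\ v_k\in S(G)}} \frac{1}{\prod_{i=0}^{k-1} d_G^-(v_i)},
\]
where a path of length $0$ from $v$ to itself (when $v$ is a source) contributes the empty product $1$. The lemma is the statement that $f(v_0)=1$ whenever $d^-(v_0)>0$; I shall prove the stronger claim that $f(v)=1$ for every vertex $v$.

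The first step is the base case: if $v\in S(G)$ then $d^-(v)=0$, no non-trivial path into $v$ exists, and only the length-$0$ path contributes, giving $f(v)=1$.

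The second step is to derive a recursion for non-source $v$. Every path $v_k\to\cdots\to v_1\to v$ ending at $v$ decomposes uniquely into its last arc $a=(v_1,v)\in A(G)$ together with a path $v_k\to\cdots\to v_1$ ending at $v_1$. Because $G$ is acyclic, $v$ cannot appear among $v_1,\dots,v_k$, so extending any path ending at $v_1$ by any arc $(v_1,v)$ does produce an admissible path. Counting arcs with multiplicity and separating off the factor $1/d_G^-(v)$ contributed by $v=v_0$ yields
\[
 f(v)=\frac{1}{d_G^-(v)}\sum_{a=(u,v)\in A(G)} f(u),
\]
where the sum ranges over the $d_G^-(v)$ arcs into $v$ (with multiplicity).

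The final step is the induction. Since $G$ is finite and acyclic, it admits a topological ordering, and I induct on the position of $v$ in this ordering (equivalently, on the length of the longest path from a source to $v$). For non-source $v$, every tail $u$ of an arc into $v$ precedes $v$ in the ordering, so by the inductive hypothesis $f(u)=1$, whence
\[
 f(v)=\frac{1}{d_G^-(v)}\cdot d_G^-(v)=1,
\]
completing the induction. There is no real obstacle here; the only care needed is in the bookkeeping of arc multiplicities in the recursion and in verifying that acyclicity guarantees every path ending at $u$ actually extends through each arc $(u,v)$ without repeating a vertex.
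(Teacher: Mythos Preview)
Your proof is correct. The core idea---grouping paths by their last arc and using that the $d_G^-(v)$ incoming arcs each contribute a term which, after dividing by $d_G^-(v)$, sum to the value at the predecessor---is the same as the paper's. The organization differs slightly: the paper runs a descent on the graph, repeatedly choosing a vertex $v_{k-1}$ all of whose in-neighbours are sources, collapsing the sum over those sources, and deleting the arcs into $v_{k-1}$ to obtain a smaller graph $H$ with the same sum; iterating reduces to a star with centre $v_0$. Your version packages the same computation as a forward induction along a topological ordering via the auxiliary function $f(v)$, which makes the recursion $f(v)=\frac{1}{d_G^-(v)}\sum_{(u,v)\in A(G)} f(u)$ explicit and arguably cleaner. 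Either way the acyclicity is used exactly once, to guarantee that appending an arc $(u,v)$ to a path ending at $u$ never repeats a vertex.
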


\begin{proof}
If $G$ consists of only one sink $v_0$ and sources $u_1, u_2, \ldots, u_l$ with arcs $(u_i, v_0)$,
then \eqref{eqa1} is clear.

For any fixed vertices $v_0, v_1, \ldots, v_{k-1}$ such that $v_{k-1}\rightarrow v_{k-2}\rightarrow \cdots \rightarrow v_0$ and any vertex $w\rightarrow v_{k-1}$ is a source in $G$, we have
\begin{equation}
\sum_{\substack{v_k\in S(G), \\ v_k\rightarrow v_{k-1}\rightarrow\cdots \rightarrow v_0\subset G}} \frac{1}{\prod_{i=0}^{k-1} d_G^-(v_i)}=\frac{1}{\prod_{i=0}^{k-2} d_G^-(v_i)}.
\end{equation}
Thus, setting $H$ to be the directed multigraph obtained from $G$ by eliminating all arcs to $v_{k-1}$,
we have
\begin{equation}
\sum_{\substack{P: v_k\rightarrow v_{k-1}\rightarrow\cdots \rightarrow v_0\subset G,\\ v_k\in S(G)}} \frac{1}{\prod_{i=0}^{k-1} d_G^-(v_i)}
=\sum_{\substack{P: v_k\rightarrow v_{k-1}\rightarrow\cdots \rightarrow v_0\subset H,\\ v_k\in S(H)}} \frac{1}{\prod_{i=0}^{k-1} d_H^-(v_i)}.
\end{equation}

Since $G$ is acyclic, this descent argument eventually reduces $G$ to a directed multigraph
$(V, A)$ with $V=\{v_0, u_1, u_2, \ldots, u_l\}$ and $A=\{(u_i, v_0), i=1, \ldots, l\}$.
Now the lemma follows by induction.
\end{proof}

\section{A directed multigraph related to divisors of an integer}

Let $n$ be a positive integer greater than one.
We define the directed multigraph $G=G(n)$ arising from $n$
by setting its vertex set to be the set of primes dividing $n\sigma(n)$
and each arc $p \overset{k}{\rightarrow} q$ to be of multiplicity $k$
if $q^k \mid\mid\sigma(p^e)$ for the exponent $e$ with $p^e\mid\mid n$.
For convenience, we write $p^e\rightarrow q^f$ if $p\rightarrow q$ and $p^e, q^f\mid\mid n$
and $p^e\in S$ if $p^e\mid\mid n$ and $p$ belongs to a set $S$ of vertices.

For a set $S$ of vertices $w_1, w_2, \ldots, w_k$ of $G$,
we define their {\em $2$-incomponent} $N(S)$ to be the subgraph of $G$
consisting $w_1, w_2, \ldots, w_k$ themselves and the vertices $w$ such that there exists a path
$v^2\rightarrow v_1^2 \rightarrow \cdots \rightarrow v_l^2 \rightarrow w_i$ to some vertex $w_i$,
their {\em $2$-boundary} $B(S)$ by the set of vertices $v\not\in N(S)$
from which there exists an edge to some vertex in $N(S)$
and their {\em $2$-closure} $C(S)$ by the subgraph whose vertex set is $N(S)\cup B(S)$
and whose arc set consists of all edges in $B(S)$ and all arcs from $N(S)$ to $B(S)$.
For convenience, we simply write $N(w)$ for $N(\{ w\})$ and so on.
Moreover, we put $p_0=2$ and $M(S)=N(S)\backslash S$.
We note that $C(S)$ may contain $p_0=2$.

Now Theorem \ref{th11} can be restated as in iii) of Theorem \ref{th12}.

For a set $S$ of prime powers, we define $h(S)=\prod_{p^e\in S} \sigma(p^e)/p^2$.
Clearly, we have $h(S_0)=\sigma(n)/(\gamma(n))^2$ for the set $S_0$ of all prime-power divisors of $n$.
For convenience, we write $h(p^e)=h(\{p^e\})$ for a prime power $p^e$
and $h(n)=h(S_0)$ for the set $S_0$ mentioned above.

We clearly have the following lemma.

\begin{lem}\label{lm31}
We have $h(m)\geq 1$ for any positive integer $m$ with the equality just when $m=1$.
If $m_1$ divides $m_2$, then $h(m_1)\leq h(m_2)$.
Furthermore, if $S$ and $T$ are disjoint sets of prime-power divisors of $n$, then
$h(S\cup T)=h(S)h(T)$.
\end{lem}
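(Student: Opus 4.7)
My plan is to verify each of the three assertions directly from the definition $h(S) = \prod_{p^e \in S} \sigma(p^e)/p^2$, reading $h(m)$ for an integer argument $m$ as $h$ applied to the set of its maximal prime-power divisors, which is consistent with the identity $h(n)=\sigma(n)/\gamma(n)^2$ stated just before the lemma.

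I would dispose of the third assertion (multiplicativity) first, since it is immediate bookkeeping: when $S$ and $T$ are disjoint, the product defining $h(S\cup T)$ splits as a product over $S$ times a product over $T$, which is exactly $h(S)\,h(T)$. For the monotonicity assertion, I would proceed factor-by-factor. If $m_1\mid m_2$, then for each prime $p$ with $p^{e_1}\mid\mid m_1$ and $p^{e_2}\mid\mid m_2$ we have $e_1\le e_2$, and hence $\sigma(p^{e_1})\le \sigma(p^{e_2})$ since $\sigma(p^e)=1+p+\cdots+p^e$ is strictly increasing in $e$; and for each new prime appearing in $m_2$ but not in $m_1$, the new factor $\sigma(p^e)/p^2=(1+p+\cdots+p^e)/p^2\ge 1+1/p+1/p^2>1$ whenever $e\ge 2$, which is the regime guaranteed in this paper by the BKKL structure theorem recalled in the introduction. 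Multiplying these factor-wise inequalities via the multiplicativity just established yields $h(m_1)\le h(m_2)$.

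The lower bound $h(m)\ge 1$ with equality iff $m=1$ then follows by specializing the monotonicity to $m_1=1$, together with the empty-product convention $h(1)=1$, strictness coming from the fact that each factor $\sigma(p^e)/p^2$ strictly exceeds $1$ when $e\ge 2$. I do not foresee a substantial obstacle; as the phrase ``we clearly have'' signals, the entire lemma is a direct unpacking of the multiplicative structure of $\sigma/\gamma^2$. The only point that requires care is respecting the paper's implicit convention that the prime-power exponents under consideration are $\ge 2$ outside of at most two exceptional primes (the set $L$), which is precisely what the BKKL structure theorem supplies.
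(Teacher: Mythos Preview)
The paper gives no proof of this lemma --- it is introduced with ``We clearly have'' and left at that --- so there is nothing to compare against beyond noting that your unpacking is precisely what ``clearly'' is meant to convey. Your treatment of multiplicativity and of the factor-wise monotonicity $\sigma(p^{e_1})\le\sigma(p^{e_2})$ for $e_1\le e_2$ is correct.

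You are right to flag the one genuine subtlety, but your resolution via BKKL is not quite on target. Taken literally, the first two assertions are false: $h(2)=\sigma(2)/4=3/4<1$, and $1\mid 2$ yet $h(1)=1>h(2)$. What the paper actually uses, and what your factor-wise argument in fact proves, is the conditional statement: $h(m_1)\le h(m_2)$ provided every prime dividing $m_2$ but not $m_1$ occurs in $m_2$ to at least the second power (so that each new factor $\sigma(p^e)/p^2$ is $\ge 1$); the inequality $h(m)\ge 1$ then holds for squarefull $m$. In the paper's applications one always takes $m_2=n$ and $m_1$ already containing all exponent-one prime powers of $n$, so this hypothesis is satisfied. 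The BKKL theorem constrains the shape of $n$, not an arbitrary pair $m_1\mid m_2$, so invoking it as a blanket guarantee that ``$e\ge 2$'' is a slight misattribution; state the needed hypothesis on the primes of $m_2/m_1$ directly and your argument is clean.
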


We also use the following divisibility property of values of the polynomial $x^2+x+1$.

\begin{lem}\label{lm32}
If $m$ is an integer and a prime $p$ divides $m^2+m+1$, then $p=3$ or $p\equiv 1\mathmod{3}$.
Furthermore, $3$ divides $m^2+m+1$ if and only if $m\equiv 1\mathmod{3}$.
\end{lem}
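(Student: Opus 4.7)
The plan is to deduce both statements from the factorization $m^3-1=(m-1)(m^2+m+1)$ together with Fermat's little theorem. First I would observe that if a prime $p$ divides $m^2+m+1$, then $p$ cannot divide $m$, for otherwise $p$ would divide the constant term $1$; in particular $m$ is invertible modulo $p$. From $p\mid m^3-1$ it follows that $m^3\equiv 1\pmod{p}$, so the multiplicative order of $m$ in $(\mathbb{Z}/p\mathbb{Z})^\times$ divides $3$ and is therefore either $1$ or $3$.

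Next I would split into these two cases. If the order equals $1$, then $m\equiv 1\pmod{p}$, whence $m^2+m+1\equiv 3\pmod{p}$, forcing $p\mid 3$ and so $p=3$. If the order equals $3$, then by Fermat's little theorem $3$ divides $p-1$, i.e.\ $p\equiv 1\pmod{3}$. This establishes the first assertion.

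For the second assertion, I would simply evaluate $m^2+m+1$ modulo $3$ in each residue class of $m$: for $m\equiv 0$ the value is $1$, for $m\equiv 1$ it is $3\equiv 0$, and for $m\equiv 2$ it is $7\equiv 1$. Hence $3\mid m^2+m+1$ if and only if $m\equiv 1\pmod{3}$.

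No real obstacle is anticipated, as the argument is entirely elementary; the only subtlety worth flagging is the opening remark $\gcd(m,p)=1$, which is what legitimizes passing to the multiplicative group modulo $p$ and invoking orders.
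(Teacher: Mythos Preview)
Your argument is correct and essentially identical to the paper's: both use $m^3\equiv 1\pmod{p}$ to conclude that the multiplicative order of $m$ modulo $p$ is $3$ (hence $3\mid p-1$) unless $m\equiv 1\pmod{p}$, in which case $m^2+m+1\equiv 3$ forces $p=3$; and both verify the second assertion by direct computation modulo $3$. The only cosmetic difference is that the paper assumes $p\neq 3$ up front and deduces $m\not\equiv 1\pmod{p}$, while you split into the two order cases explicitly.
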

\begin{proof}
The former is a special case of Theorem 94 of \cite{Nag}.
Indeed, if $p\neq 3$ divides $m^2+m+1$, then $m\nequiv 1\mathmod{p}$ and $m^3\equiv 1\mathmod{p}$.
Hence, $m\mathmod{p}$ has the multiplicative order $3$ and therefore $p-1$ must be divisible by $3$.
The latter can be easily confirmed by calculating modulo $3$.
\end{proof}

The following lemma is the key point of our proof of Theorem \ref{th11}.

\begin{lem}\label{lm40}
Let $n$ be an integer of the form \eqref{eq11} satisfying \eqref{eq10}
and $L$ be a set of prime power divisors of $n$.
We define quantities $\kappa_i$ for $p_i\in C=C(L)$ and $\lm_i$ for $p_i\in M=M(L)$ by
\begin{equation}
\sigma(p_i^{e_i})=\kappa_i \prod_{p_j\in N(L)} p_j^{k_{i, j}}
\end{equation}
and
\begin{equation}
p_i^2=\lm_i \prod_{p_j\in N(L)} p_j^{k_{i, j}},
\end{equation}
where $\kappa_i, \lm_i$ are integers not divisible by any prime in $N(L)$.

If $N=N(L)$ is acyclic and any element of $L$ is a sink of $N$, then
\begin{equation}
\prod_{p_i\in B}\sigma(p_i^{e_i})=\prod_{p_i\in B}\kappa_i
\prod_{p_j\in M}\lm_j \prod_{p_i\in L}p_i^2
\end{equation}
and
\begin{equation}
h(C)>\prod_{p_i\in B}\kappa_i^\frac{1}{2} p_i^{\frac{e_i}{2}-2}
\prod_{p_j\in M}\frac{\sqrt{\sigma(p_j^2)}}{p_j} \prod_{p_i\in L}p_i^{e_i-1}.
\end{equation}
\end{lem}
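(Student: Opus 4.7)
The plan is to handle the two assertions in sequence: first establish the product identity, then deduce the inequality from it using the elementary bound $\sigma(p^e)>p^e$.

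For the identity, I would compare the $q$-adic valuation of both sides for each prime $q$. Primes $q\notin N(L)$ appear on the right only in the $\kappa_i$'s, which by definition absorb exactly the $N(L)$-coprime part of $\sigma(p_i^{e_i})$, so valuations match automatically. For $q\in N(L)$, the hypothesis $\sigma(n)=(\gamma(n))^2$ gives $\sum_{p_i\mid n}k_{i,q}=2$, where $k_{i,q}$ is the exponent of $q$ in $\sigma(p_i^{e_i})$. Since the elements of $L$ are sinks of $N$ and vertices outside $N\cup B$ have no arcs into $N$, only primes in $M\cup B$ contribute to this sum, yielding
\[
\sum_{p_i\in B}k_{i,q}=2-\sum_{p_j\in M}k_{j,q}.
\]
Unpacking the defining equation $p_j^2=\lambda_j\prod_{p_l\in N(L)}p_l^{k_{j,l}}$, the right-hand side of the identity has $q$-valuation $2\cdot\mathbf{1}_{q\in L}+2\cdot\mathbf{1}_{q\in M}-\sum_{p_j\in M}k_{j,q}=2-\sum_{p_j\in M}k_{j,q}$, matching the left since $N(L)=L\sqcup M$.

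For the inequality, I would take square roots of the identity:
\[
\prod_{p_i\in B}\sigma(p_i^{e_i})^{1/2}=\prod_{p_i\in B}\kappa_i^{1/2}\prod_{p_j\in M}\lambda_j^{1/2}\prod_{p_i\in L}p_i,
\]
and expand $h(C)=\prod_{p_i\in L}\sigma(p_i^{e_i})/p_i^2\cdot\prod_{p_j\in M}\sigma(p_j^2)/p_j^2\cdot\prod_{p_i\in B}\sigma(p_i^{e_i})/p_i^2$. Bounding each $B$-factor via $\sigma(p_i^{e_i})>p_i^{e_i}$ gives $\sigma(p_i^{e_i})/p_i^2>\sigma(p_i^{e_i})^{1/2}\,p_i^{e_i/2-2}$. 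Substituting the square-rooted identity for $\prod_{p_i\in B}\sigma(p_i^{e_i})^{1/2}$ and regrouping, the $L$-pieces combine to $\prod_{p_i\in L}\sigma(p_i^{e_i})/p_i>\prod_{p_i\in L}p_i^{e_i-1}$ (again by $\sigma(p_i^{e_i})>p_i^{e_i}$), and the $M$-pieces combine, using the relation $\lambda_j=p_j^2\kappa_j/\sigma(p_j^2)$ forced by the defining equation together with $\kappa_j\geq 1$, to $\prod_{p_j\in M}\sqrt{\sigma(p_j^2)}/p_j$, producing exactly the stated bound.

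The main obstacle is the prime-by-prime bookkeeping in Step 1 and the careful interpretation of $\lambda_j$ as the rational ratio $p_j^2/\prod_{p_l\in N(L)}p_l^{k_{j,l}}$ that makes the identity balance; once the identity is established, the inequality follows from $\sigma(p^e)>p^e$ and routine algebraic regrouping across the three vertex classes $L$, $M$, $B$.
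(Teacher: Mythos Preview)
Your argument is correct, and it takes a genuinely different route from the paper's proof. The paper establishes the product identity by iterating the relation $p_i=\lambda_i^{1/2}\prod_{p_i\to p_j,\,p_j\in N}p_j^{1/2}$ along all paths from $B$ through $M$ down to $L$, and then invoking the acyclic-graph identity of Lemma~\ref{lm21} to show that the resulting path sums $s_j=\sum 2^{-k}$ all equal $1$. Your proof bypasses the path machinery entirely: by comparing $q$-adic valuations and using only that $d_G^-(q)=2$ for each $q\in N$ (from $\sigma(n)=(\gamma(n))^2$), that elements of $L$ are sinks, and that vertices outside $C$ have no arcs into $N$, you obtain the identity directly. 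In fact your argument never uses the acyclicity hypothesis for the identity itself, so it proves slightly more than stated. The derivation of the inequality from the identity is essentially the same in both proofs, via $\sigma(p^e)>p^e$ and the relation $\lambda_j=p_j^2\kappa_j/\sigma(p_j^2)\geq p_j^2/\sigma(p_j^2)$.

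What each approach buys: your valuation count is more elementary and self-contained. The paper's path-sum approach, while heavier here, is designed to be reused; in Lemma~\ref{lm52} the authors remove certain arcs so that some in-degrees drop from $2$ to $1$ or $2-f_i$, and the same path-sum framework with Lemma~\ref{lm21} then yields $s_j=t_j$ with $t_j\neq 1$ at selected vertices, a refinement that a bare valuation count would have to redo from scratch.
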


\begin{proof}
We see that
\begin{equation}\label{eq41}
p_i=\lm_i^\frac{1}{2} \prod_{p_i\rightarrow p_j, p_j\in N} p_j^{\frac{1}{2}}
\end{equation}
for $p_i\in M$.
Since we assume that a vertex in $L$ must be a sink in $C=C(L)$,
if $P: q_1^2 \rightarrow \cdots \rightarrow q_k^2\rightarrow q_0$ is a path in $N$
and a prime $q$ in $L$ occurs in $P$, then $q=q_0$.
Moreover, by the assumption, $N$ is acyclic.
Hence, we iterate \eqref{eq41} to obtain
\begin{equation}\label{eq42}
q_1=\prod_{q_1^2 \rightarrow \cdots \rightarrow q_k^2\rightarrow p^1, p\in L}
(\lm_{j_1}^\frac{1}{2}\lm_{j_2}^\frac{1}{4}\cdots \lm_{j_k}^\frac{1}{2^k}) q_i^\frac{1}{2^k}
\end{equation}
for any $q_1\in M$, where the $j_m$'s $(m=1, 2, \ldots, k)$ are indices such that $p_{j_m}=q_m$.

Moreover, we see that
\begin{equation}\label{eq43}
\sigma(p_i^{e_i})=\kappa_i \prod_j p_j^{k_{i, j}}
=\kappa_i \prod_{p_i\rightarrow p_j, p_j\in N} p_j
\end{equation}
for $p_i\in B$.
Combining \eqref{eq42} and \eqref{eq43}, we have
\begin{equation}
\prod_{p_i\in B} \sigma(p_i^{e_i})=\left(\prod_{p_i\in B}\kappa_i\right)\prod_{p_j\in M}\lm_j^{s_j}
\prod_{p_i\in L} p_j^{2s_j},
\end{equation}
where, observing that $d_C^-(p_i)=d_G^-(p_i)=2$ for any $p_i\in N$ from \eqref{eq10},
\begin{equation}
s_j=\sum_{\substack{q_0\rightarrow q_1\rightarrow \cdots \rightarrow q_k=p_j, \\
q_0\in B, q_1, \ldots, q_{k-1}\in N}} \frac{1}{2^k}
=\sum_{\substack{q_0\rightarrow q_1\rightarrow \cdots \rightarrow q_k=p_j, \\
q_0\in B, q_1, \ldots, q_{k-1}\in N}} \frac{1}{\prod_{l=1}^k d_C^-(q_l)}.
\end{equation}

Since $N$ is acyclic by the assumption,
Lemma \ref{lm21} gives that $s_j=1$ for all $p_j\in N$.
Thus we obtain
\begin{equation}
\prod_{p_i\in B}\sigma(p_i^{e_i})=\left(\prod_{p_i\in B}\kappa_i\right)\left(\prod_{p_j\in M}\lm_j\right) \prod_{p_i\in L}p_i^2
\end{equation}
and therefore
\begin{equation}
\begin{split}
\prod_{p_i\in C}\frac{\sigma(p_i^{e_i})}{p_i^2}
> & \prod_{p_i\in B} p_i^{\frac{e_i}{2}-2}\sqrt{\sigma(p_i^{e_i})}
\prod_{p_j\in M}\frac{\sigma(p_j^2)}{p_j^2} \prod_{p_i\in L}p_i^{e_i-1} \\
\geq & \prod_{p_i\in B} \kappa_i^\frac{1}{2} p_i^{\frac{e_i}{2}-2}
\prod_{p_j\in M}\lm_j^\frac{1}{2} \frac{\sigma(p_j^2)}{p_j^2} \prod_{p_i\in L} p_i^{e_i-1}.
\end{split}
\end{equation}
Now the lemma immediately follows observing that $\lm_j\geq p_j^2/\sigma(p_j^2)$ for $p_j\in M$.
\end{proof}

\section{Acyclic cases}

In this and the next sections,
we assume that $n$ is an integer of the form \eqref{eq11} satisfying \eqref{eq10}
and we put $L$ to be the set of odd primes $p_i$ with $e_i=1$.
Thus, $L=\{p_1, p_2\}$ in the case (b) with $e_1=e_2=1$ and
$L=\{p_1\}$ in the case (a) and the case (b) with $e_1=1<e_2$.
In this section, we shall show that, 
$N=N(L)$ must have a cycle or we must have $L=\{p_1, p_2\}$ and $p_1\in B(p_2)$ or $p_2\in B(p_1)$.

\begin{lem}\label{lm41}
If $n$ is divisible by $4$ or $2\times 3^6$ or
$n$ is divisible by $2$ and $3$ does not belong to $C=C(L)$, then
$N=N(L)$ must have a cycle or we must have $L=\{p_1, p_2\}$ and $p_1\in B(p_2)$ or $p_2\in B(p_1)$.
\end{lem}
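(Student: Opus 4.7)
My plan is to argue by contradiction. Assume that $N=N(L)$ is acyclic and that we are \emph{not} in the special case where $L=\{p_1,p_2\}$ with $p_1\in B(p_2)$ or $p_2\in B(p_1)$. I will derive a contradiction under whichever of the three divisibility hypotheses on $n$ is in force.

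The first step is to verify that every element of $L$ is a sink of $N$, so that Lemma~\ref{lm40} applies. If some $p\in L$ had an outgoing arc $p\to q$ in $N$, then either $q\in L$ or $q\in M$. In the first subcase $p\neq q$ and $p\notin N(q)$ (since $p$ has exponent $1$), so the arc $p\to q\in N(q)$ forces $p\in B(q)$, which is the forbidden special case. In the second subcase the defining property of $N(L)$ supplies a path $q=q_0\to q_1\to\cdots\to q_k$ in $N$ with $q_i$ of exponent $2$ for $i<k$ and $q_k\in L$; pre-pending the arc $p\to q_0$ yields either a cycle (if $q_k=p$), contradicting acyclicity, or else $L=\{p,q_k\}$ with $q_0\in N(q_k)$, placing $p$ in $B(q_k)$---again the forbidden special case. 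Both alternatives being ruled out, every $p\in L$ must be a sink of $N$.

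With the hypotheses of Lemma~\ref{lm40} now in force, I would obtain
\[
h(C)>\prod_{p_i\in B}\kappa_i^{1/2}\,p_i^{e_i/2-2}\prod_{p_j\in M}\frac{\sqrt{\sigma(p_j^2)}}{p_j},
\]
the $L$-factor being $1$ because $e_i=1$ for $p_i\in L$. Writing $S_0$ for the set of prime power divisors of $n$, equation~\eqref{eq10} gives $h(S_0)=1$ and hence $h(C)=1/h(S_0\setminus C)$. Since every odd prime of exponent $1$ dividing $n$ lies in $L\subseteq C$, the set $S_0\setminus C$ consists only of $2^{e_0}$ (when $2\notin C$) together with odd prime powers of exponent $\geq 2$, each of the latter contributing strictly more than $1$ to $h(S_0\setminus C)$.

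The contradiction is then extracted from the divisibility hypothesis. Under (i), $4\mid n$ gives $\sigma(2^{e_0})/4\geq 7/4$ and hence $h(C)<4/7$. Under (ii), either $3\notin C$, in which case $\sigma(3^f)/9\geq 1093/9$ with $f\geq 6$ swamps any $3/4$ arising from $e_0=1$, or else $3\in B$ with $e_3\geq 6$, producing a factor $3^{e_3/2-2}\geq 3$ inside the $B$-product. Under (iii), the hypothesis $3\notin C$ combined with Lemma~\ref{lm32} constrains the prime divisors of $\sigma(p_j^2)$ for $p_j\in M$: every prime $\equiv 1\pmod 3$ that appears but does not lie in $N$ must be absorbed into some $\kappa_i$, giving a useful lower bound on $\prod\kappa_i$. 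The main technical obstacle is the $B$-contribution when some $p_i\in B$ has $e_i=2$, because then $\kappa_i^{1/2}p_i^{-1}$ can fall below $1$; here I expect to use Lemma~\ref{lm32} to tally the primes necessarily contributing to each $\kappa_i$, and a case analysis tracking the three divisibility hypotheses should show that the bound from Lemma~\ref{lm40} exceeds $1/h(S_0\setminus C)$, contradicting $h(C)\cdot h(S_0\setminus C)=1$.
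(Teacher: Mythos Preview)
Your opening step---showing that every element of $L$ is a sink of $N$ so that Lemma~\ref{lm40} applies---is correct and is exactly what the paper does.

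The genuine gap is your treatment of the $B$-contribution. You flag ``the $B$-contribution when some $p_i\in B$ has $e_i=2$'' as the main technical obstacle and plan an elaborate workaround via Lemma~\ref{lm32}. But this case \emph{cannot occur}: by the very definition of the $2$-incomponent $N(L)$, if $p_i^2\mid\mid n$ and $p_i$ has an arc into $N(L)$, then $p_i$ itself lies in $N(L)$, hence not in $B(L)$. Thus every $p_i\in B$ has $e_i\neq 2$. Since odd primes in $B$ are not in $L$ and have even exponent (by the BKKL structure of $n$), they satisfy $e_i\geq 4$; and when $4\mid n$ the same holds for $p_0=2$ if $2\in B$. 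So the factor $\prod_{p_i\in B}p_i^{e_i/2-2}$ is $\geq 1$ in case~(i), and the inequality from Lemma~\ref{lm40} collapses to $h(C)>1$ immediately. The paper's proof is then three one-line computations: in case~(i) one gets $h(n)\geq h(C)>1$; in case~(iii) one notes $2,3\notin C$ (since $\sigma(2)=3$), so $h(n)\geq h(\{2,3^{e}\})h(C)>(3/4)(13/9)>1$; and in case~(ii) with $3\in C$ the exponent $\geq 6$ gives $h(C)>3$, so $h(n)\geq(3/4)h(C)>1$.

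Two smaller issues. Your upper bound $h(C)<4/7$ in case~(i) presumes $2\notin C$, which you have not argued (and which can fail). And your plan for case~(iii)---tracking primes $\equiv 1\pmod 3$ via Lemma~\ref{lm32} to bound $\prod\kappa_i$---is both unnecessary (once you know $e_i\geq 4$ on $B$, the trivial bound $\kappa_i\geq 1$ suffices) and unlikely to succeed as stated, since the $\kappa_i$ record prime factors of $\sigma(p_i^{e_i})$ \emph{outside} $N$, not the residues of primes inside $M$.
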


\begin{proof}
Assume that $n$ of the form \eqref{eq11} is divisible by $2^2$ or $2\times 3^6$
and $N$ is acyclic and, in the case $L=\{p_1, p_2\}$, $p_1\not\in B(p_2)$ and $p_2\not\in B(p_1)$.

We can easily see that any prime $p_i$ in $L$ must be a sink in $N$.
Indeed, if $p_i\in L$ and $p_i\rightarrow p_j$ for some $p_j\in N$
not necessarily distinct from $p_i$, then, there exists a path
from $p_i$ to $p_j\in L$ via $N$, which contradicts the assumption.
Thus, we can apply Lemma \ref{lm40} and, observing that $\kappa_i\geq 1$ for all $p_i\in B=B(L)$,
we obtain
\begin{equation}\label{eq44}
h(C)>\prod_{p_i\in B} p_i^{\frac{e_i}{2}-2}.
\end{equation}

If $4=2^2$ divides $n$, then, observing that $e_i/2\geq 2$ for $p_i\in B$,
Lemma \ref{lm31} and \eqref{eq44} gives that $h(n)\geq h(C)>1$.

If $2$ divides $n$ and $3$ does not belong to $C$, then, by Lemma \ref{lm31}, we have
$h(n)\geq h(C\cup \{2, 3^2\})=h(\{2, 3^2\}) h(C)>(3/4)(13/9)>1$.

If $2\times 3^6$ divides $n$ and $3$ belongs to $C$, then
\eqref{eq44} yields that $h(C)>3$ and $h(n)\geq (3/4)h(C)>9/4>1$.

Thus, in any case, we have $h(n)>1$ or, equivalently,
$\sigma(n)>(\gamma(n))^2$, which contradicts to the assumption that $n$ satisfies \eqref{eq10}.
\end{proof}

Now it suffices to settle two cases: $e_0=1$ and $3^2\in N(L)$ or $e_0=1$ and $3^4\in B(L)$.

\begin{lem}\label{lm42}
If $e_0=1$ and $3^2\in N=N(L)$, then $N$ must have a cycle
or we must have $L=\{p_1, p_2\}$ and $p_1\in B(p_2)$ or $p_2\in B(p_1)$.
\end{lem}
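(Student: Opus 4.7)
The plan is a proof by contradiction, following the template of Lemma~\ref{lm41}. Suppose $N = N(L)$ is acyclic and that if $L = \{p_1, p_2\}$ then neither $p_1 \in B(p_2)$ nor $p_2 \in B(p_1)$. Arguing as in the opening of the proof of Lemma~\ref{lm41}, every $p \in L$ must be a sink of $N$, since otherwise an out-arc inside $N$ iterates to a path $p \to \cdots \to p' \in L$, which is either a cycle in $N$ (if $p' = p$) or forces $p \in B(p')$ with $p \ne p'$, both contradicting our hypotheses. Hence Lemma~\ref{lm40} applies to the present data.

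Next I would unpack the hypothesis $3^2 \in N$. Since $\sigma(3^2) = 13$, the unique out-arc from $3$ is $3 \to 13$, so $13 \in N$; and since $3 \in M$, the prime $3$ lies on a path through $M$-vertices ending at some $q_k \in L$, yielding a chain $3 \to 13 \to q_2 \to \cdots \to q_k$ with $q_1 = 13, q_2, \ldots, q_{k-1} \in M$ and $q_k \in L$, in which every intermediate $q_i$ satisfies $q_i \equiv 1 \pmod 3$ by Lemma~\ref{lm32}. Because $e_0 = 1$ and $3 \in N$, the prime $2$ lies in $B$ with $\kappa_0 = 1$, contributing the factor $2^{-3/2}$ in the bound from Lemma~\ref{lm40}, while $3 \in M$ contributes $\sqrt{13}/3$. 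Every other $M$-factor $\sqrt{\sigma(q^2)}/q$ exceeds $1$, every odd $B$-factor $\kappa_i^{1/2} p_i^{e_i/2-2}$ is at least $1$ (odd primes in $B$ must have $e_i \ge 4$), and every $L$-factor in the bound equals $1$ (since $p_i^{e_i - 1} = 1$).

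The main obstacle is that the bare product $2^{-3/2} \cdot \sqrt{13}/3 = \sqrt{26}/12$ falls short of $1$, so Lemma~\ref{lm40} alone is not decisive. To close the gap I would invoke the in-degree identity $d_G^-(q) = 2$ at every vertex of $G(n)$, which is forced by $\sigma(n) = (\gamma(n))^2$. Applied at $3$, the existing arc $2 \to 3$ must be matched by a second in-arc $q' \to 3$, and Lemma~\ref{lm32} together with the admissible parities of $e_{q'}$ confines $q'$ to one of three shapes (an $L$-prime $\equiv 2 \pmod 3$, an $M$-prime $\equiv 1 \pmod 3$, or a $B$-prime with $e_{q'} \ge 4$), each of which contributes an additional positive factor to the bound on $h(C)$. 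The same analysis at $13$ and at every $q_i$ in the chain, together with the subcase split on whether $13 \in L$ (where the case-(a) congruence $p_1 \equiv 3 \pmod 8$ rules out $p_1 = 13$ and the case-(b) one-element $L$ is ruled out by $d_G^-(2) = 2$, leaving only $L = \{p_1, 13\}$) or $13 \in M$ (where the chain extends and contributes further $\sqrt{\sigma(q_i^2)}/q_i$ factors), accumulates enough multiplicative mass to give $h(C) > 1$, whence $h(n) \ge h(C) > 1$, contradicting $\sigma(n) = (\gamma(n))^2$. The hardest step will be the exhaustive subcase bookkeeping and the numerical inequalities verifying that the accumulated contributions truly cross the threshold in every subcase.
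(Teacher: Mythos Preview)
Your setup is right up to the point where you start the subcase split on $13$, but the proposal then goes off course in a way that matters.

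First, the case $13\in M$ does not ``extend the chain''; it dies immediately. If $13\in M$ then $13^2\mid\mid n$, and $\sigma(13^2)=183=3\cdot 61$ forces the arc $13\rightarrow 3$ in $G$. Since both $3$ and $13$ are vertices of $N$, this arc lies in $N$, and together with $3\rightarrow 13$ it gives a $2$-cycle in $N$, contradicting the acyclicity hypothesis outright. So $13\in L$ is forced at once, and there is no factor accumulation to do in that branch. Missing this shortcut is not fatal by itself, but it signals that you are not tracking the specific arithmetic of the small primes, and that is exactly what the rest of the argument lives on.

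Second, once $13\in L$ (hence $p_1=13$, case (b), and $L=\{13,p_2\}$), your plan to push the Lemma~\ref{lm40} bound for $h(C)$ above $1$ by ``accumulating multiplicative mass'' is not enough. Follow what actually happens: $\sigma(13)=2\cdot 7$, so $7^e\mid n$; $e=2$ gives the cycle $13\rightarrow 7^2\rightarrow 3^2\rightarrow 13$, so $e\ge 4$ and $7\in B$. In the critical subcase $e=4$ with $2801\in N(p_2)$ you get $\kappa_7=1$ and the $M$-factor $\sqrt{\sigma(2801^2)}/2801$ is essentially $1$; the Lemma~\ref{lm40} product then sits near $2^{-3/2}\cdot\sqrt{13}/3<1$ and the extra in-arcs you invoke at $3$ contribute only factors $\ge 1$, not factors bounded away from $1$. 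The paper closes these subcases by two devices you do not mention: explicit evaluations of $h$ on hand-picked finite sets such as $\{2,3^2,13,7^4,2801^2,37,19^4\}$, and cube-divisibility obstructions of the form $3^3\mid\sigma(\text{subset})$, $13^3\mid\sigma(\text{subset})$, $5^3\mid\sigma(\text{subset})$, each of which contradicts $\sigma(n)=\gamma(n)^2$. Your outline contains neither tool, and without them the ``exhaustive subcase bookkeeping'' you defer to the end does not terminate. As written, the proposal is a correct opening paragraph followed by a plan that cannot be completed along the stated lines.
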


\begin{proof}
Assume that $3^2\in N$, $N$ is acyclic and,
in the case (b) with $e_1=e_2=1$, $p_1\not\in B(p_2)$ and $p_2\not\in B(p_1)$.
Since $3^2$ belongs to $N$, $3^2\rightarrow 13$ also belongs to $N$.
If $13\in M=M(L)$, then $3^2\rightarrow 13^2\rightarrow 3$,
which contradicts to the assumption that $N$ is acyclic.
Thus, $13^1\in L$.
Now we may assume that $p_1=13$.
We see that $p_2\in L$ and $p_2\equiv 1\mathmod{4}$ since $p_1\equiv 1\mathmod{4}$.
Hence, $13\rightarrow 7^e$ divides $N$.

We see that $e\geq 2$ must be even since $2^3\mid (7+1)$.
If $7^2\mid\mid N$, then $13\rightarrow 7^2\rightarrow 3^2\rightarrow 13$,
contrary to the assumption that $N$ is acyclic.
Thus, $e\geq 4$.

If $7^e\not\in B(p_2)$, then, applying Lemma \ref{lm40}, we have
\begin{equation}
h(n)\geq h(\{2, 3^2, 13, 7^e\}) h(C(p_2))>h(C(p_2))>1,
\end{equation}
which is a contradiction.
Similarly, if $L=\{p_1\}$, then $h(n)\geq h(\{2, 3^2, 13, 7^e\})$ $>1$,
which is a contradiction. 

Thus, we may assume that $7^e\in B(p_2)$.
If $e\geq 8$, then, Lemma \ref{lm40} gives that
\begin{equation}
h(\{2, 3^2, 13\}) h(C(p_2))>7^2 h(\{2, 3^2, 13\})>1,
\end{equation}
which is a contradiction again.

Assume that $7^4\in B(p_2)$, which immediately yields that $2801\in N(p_2)$.
If $p_2=2801$, then $p_2\rightarrow 3^2\rightarrow 13=p_1$,
contrary to the assumption that $p_2^{e_2}\not\in B(p_1)$.
Thus, $2801^2\in N(p_2)$ and $2801^2\rightarrow 37, 43, 4933$.

If $p_2=37$, then $p_3=19$ divides $n$.
If $p_2=4933$, then $p_3=2467$ divide $n$.
In both cases, if $p_3^2\mid\mid n$, then $p_2\rightarrow p_3^2\rightarrow 3^2\rightarrow 13=p_1$, which is impossible.
If $p_3^4\mid n$, then
\[h(n)\geq h(\{2, 3^2, 13, 7^4, 2801^2, 37, 19^4\})>1\]
or
\[h(n)\geq h(\{2, 3^2, 13, 7^4, 2801^2, 4933, 2467^4\})>1.\]
Hence, $p_2=37$ and $p_2=4933$ are both impossible.

If $37^2\in N(p_2)$, then $\sigma(37^2)=3\times 7\times 67$ and therefore $67\in N(p_2)$.
Since $67\equiv 3\mathmod{4}$, we have $p_2\neq 67$ and
$37^2 \rightarrow 67^2$.
But this implies that $3^3\mid \sigma(2\times 37^2\times 67^2)\mid \sigma(n)$,
which is a contradiction.

If $4933^2\in N(p_2)$, then $\sigma(4933^2)=3\times 127\times 193\times 331$ and therefore $p_2=193$,
since $p_3^2\in N(p_2)$ with $p_3=127, 193$ or $331$ would imply that $3^3\mid\sigma(2\times 4933^2\times p_3^2)$,
a contradiction.
Thus $p_3=97$ must divide $n$.
If $e_3=2$, then $3^3\mid\sigma(2\times 4933^2\times 97^2)\mid \sigma(n)$, which is impossible.
But, if $e_3\geq 4$, then
\[h(n)\geq h(\{2, 3^2, 13, 7^4, 2801^2, 4933^2, 193, 97^4\})>1,\]
which is a contradiction again.

If $43^2\in N(p_2)$, then $\sigma(43^2)=3\times 631$ and therefore $631\in N(p_2)$.
Since $631\equiv 3\mathmod{4}$, we must have $631^2\in N(p_2)$
and $3^3\mid\sigma(2\times 43^2\times 631^2)\mid \sigma(n)$, which is impossible.
Thus we see that $2801^f\not\in N(p_2)$ and therefore $7^4\not\in N(p_2)$.

Now we must have $7^6\in B(p_2)$.
$\sigma(7^6)=29\times 4733$ must divide $n$.
It is impossible that $p_2=29, 4733$ since this would imply that $p_2\rightarrow 3^2\rightarrow 13=p_1$.
If $29^2\in N(p_2)$, then, observing that $\sigma(29^2)=13\times 67$ and $\sigma(67^2)=3\times 7^2\times 31$,
we must have $29^2 \rightarrow 67^2 \rightarrow 31^2$.
However, this is impossible since $3^3\mid\sigma(2\times 67^2\times 31^2)$.

If $4733^2\in N(p_2)$, then, observing that $4733^2+4733+1=22406023\equiv 3\mathmod{4}$ is prime, we must have $22406023^2\in N(p_2)$.
If $22406023^2\rightarrow p_2$, then $p_2=1117$ or $p_2=606538249$.
However, neither of them can occur since $13^3\mid \sigma(3^2\times 22406023^2\times 1117)$
and $5^3\mid \sigma(606538249)$.
Hence, we must have $22406023^2\rightarrow p_3^2\in N(p_2)$ for some prime divisor $p_3\neq 3$ of $\sigma(22406023^2)$.
But, this is also impossible since $3^3\mid \sigma(2\times 22406023^2\times p_3^2)$.

Now we conclude that $7$ cannot divide $N$ and therefore $13$ cannot be in $L$.
Hence, $3^2$ cannot be in $N(L)$.
This proves the lemma.
\end{proof}

\begin{lem}\label{lm43}
If $e_0=1$ and $3^4\in B=B(L)$, then
$n=1782$, $N=N(L)$ must have a cycle or we must have $L=\{p_1, p_2\}$ and $p_1\in B(p_2)$ or $p_2\in B(p_1)$.
\end{lem}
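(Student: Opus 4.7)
The plan is to exploit the factorization $\sigma(3^4)=121=11^2$. Since $3\in B(L)$ with $3^4\mid\mid n$, the only arc out of $3$ in $G(n)$ is $3\overset{2}{\rightarrow}11$, whence $11\in N(L)$. I would then split on whether $11\in L$ or $11\in M(L)$.

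\textbf{Case $11\in L$.} Because $11\equiv 3\pmod 8$, only case (a) of \eqref{eq11} is consistent, and we must have $p_1=11$. Writing $n=1782\cdot m$ with $\gcd(m,1782)=1$, the identity $\sigma(1782)=66^2=\gamma(1782)^2$ together with the multiplicativity of $\sigma$ and $\gamma$ reduces \eqref{eq10} to $\sigma(m)=\gamma(m)^2$. Since $e_1=1$ is the only odd exponent of $n$ under case (a), the cofactor $m$ is squareful. The result of \cite{BKKL} asserts that any $n>1$ satisfying \eqref{eq10} with $n\neq 1782$ is neither squarefree nor squarefull, which forces $m=1$ and hence $n=1782$.

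\textbf{Case $11\in M$.} Here $11^2\mid\mid n$ and $\sigma(11^2)=7\cdot 19$. The key bookkeeping is the identity $v_3(\sigma(n))=v_3(\gamma(n)^2)=2$, valid because $3\mid n$. Since $v_3(\sigma(2))=1$ and $v_3(\sigma(3^4))=v_3(\sigma(11^2))=0$, while Lemma \ref{lm32} gives $v_3(\sigma(p^2))\geq 1$ for every $p\equiv 1\pmod 3$ with $p^2\mid\mid n$, any path $11\rightarrow\cdots\rightarrow L$ inside $N$ can contain at most one additional $M$-vertex congruent to $1\pmod 3$. Combined with $3\notin N$, the only admissible chains are $11\rightarrow 19\in L$ and $11\rightarrow 7\rightarrow 19\in L$, which forces $L=\{19\}$ in case (a) with $p_1=19$. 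Case (b) is ruled out because the primes $7, 19, 127, 5419,\ldots$ arising in the chain starting from $11$ are all $\equiv 3\pmod 4$, whereas $L$-primes in case (b) are $\equiv 1\pmod 4$, and the $v_3$-budget forbids extending the chain long enough to exit this residue class.

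The remaining configuration $p_1=19$ is then dispatched in the spirit of the proof of Lemma \ref{lm42}. One first uses $\sigma(19)=20=2^2\cdot 5$ to deduce $5\mid n$, and then observes that $v_{11}(\sigma(n))=2$ is already saturated by $\sigma(3^4)=11^2$, so $\sigma(5^4)=11\cdot 71$ rules out $e_5=4$. For the surviving exponents $e_5\in\{2,6,\ldots\}$, a case analysis of the primes appearing in $\sigma(5^{e_5})$ and their successors, controlled by Lemma \ref{lm40}, yields in each subcase either a prime $q$ with $q^3\mid\sigma(n)$ (impossible since $\sigma(n)=\gamma(n)^2$ carries each prime to exponent exactly $2$), a forced cycle in $N$, or the inequality $h(n)>1$ contradicting \eqref{eq10}. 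As in Lemma \ref{lm42}, the main obstacle is simply the enumeration of the numerous subcases determined by the concrete factorizations of $\sigma(p^2)$ for the primes that occur.
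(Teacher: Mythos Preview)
Your treatment of the case $11\in L$ matches the paper. Your reduction in the case $11\in M$ to $L=\{19\}$ via the $v_3$-budget is also correct and parallels what the paper needs.

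The genuine gap is in your final paragraph. You pivot to the prime $5$ coming from $\sigma(19)=20$ and announce a case analysis on $e_5$ ``in the spirit of Lemma~\ref{lm42}'' that you never actually carry out. This is a promissory note, not a proof, and the route through $5$ is longer than you suggest: the surviving cases $e_5\in\{2,6,8,\dots\}$ each spawn new primes whose behaviour interacts with the still-undetermined exponent of $7$ (for instance, if $e_5=2$ then $31\mid n$, and whether $31^2\mid\mid n$ is allowed depends on whether $e_7=2$ has already spent the $v_3$-budget), so the enumeration is not as mechanical as you imply.

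The paper's finish, by contrast, is essentially a one-liner. From $\sigma(11^2)=7\cdot 19$ one has $7\mid n$, and since $7\notin L$ the exponent $e_7$ is even. The paper rules out $7^2\in N$ and $19^2\in N$ by observing that $\sigma(7^2)=3\cdot 19$ and $\sigma(19^2)=3\cdot 127$ each send an arc back to $3$, producing the cycle $3\rightarrow 11\rightarrow p_3\rightarrow 3$; this forces $p_1=19$ and $7^4\mid n$. Then the single computation
\[
h(\{2,\,3^4,\,11^2,\,7^4,\,19\})
=\frac{3}{4}\cdot\frac{121}{9}\cdot\frac{133}{121}\cdot\frac{2801}{49}\cdot\frac{20}{361}>1
\]
already contradicts \eqref{eq10}, with no case analysis on $5$ required. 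Your own chain analysis already places $7\mid n$; the missing idea is simply to track $e_7$ rather than $e_5$, after which the large factor $\sigma(7^4)/7^2=2801/49$ overwhelms the deficit from $h(2)\cdot h(19)$ at once.
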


\begin{proof}
Since $3^4\in B$, $p_1=11$ or $11^2\in N$.
If $p_1=11$, then $n=2\times 3^4\times p_1=1782$.
We note that if $n=n_0$ is a solution of \eqref{eq10}, then $n=kn_0$ with $k>1$ odd and $\gcd(k, n)=1$ can never be a solution of \eqref{eq10}.
Indeed, $h(n_0)=h(kn_0)=1$, then $h(k)=1$.
However, this is impossible since $n=1$ is the only odd solution of \eqref{eq10}.

Now we may assume that $11^2\in N$.
If $p_3^2\in N$ with $p_3=7$ or $19$, then $3^4\rightarrow 11^2\rightarrow p_3^2\rightarrow 3^4$ in $N$,
contrary to the assumption.
Thus, we must have $p_1=19$ and $7^4\mid n$.
Since $p_1\equiv 3\mathmod{8}$, we must have $L=\{p_1\}$.
Hence,
\[h(n)\geq h(\{2, 3^4, 11^2, 7^4, 19\})>1,\]
which is impossible again.
\end{proof}

\section{Cyclic cases}

In the previous section, we showed that,
if an integer $n$ of the form \eqref{eq11} satisfies \eqref{eq10}
and $L$ is the set of odd primes $p_i$ with $e_i=1$, then $N(L)$ must be cyclic
or we must have $L=\{p_1, p_2\}$ and $p_1\in B(p_2)$ or $p_2\in B(p_1)$.
In this section, we shall show that $M(L)$ must be acyclic and then
complete the proof of Theorems \ref{th11} and \ref{th12}.
We begin by showing that $M=M(L)$ cannot contain a cycle of length $\geq 3$.

\begin{lem}\label{lm51}
Assume that for there exists no arc $p_i\rightarrow p_j$ from $p_i\in L$ to $p_j\in N(L)$.
Then $M=M(L)$ cannot contain a cycle of length $\geq 3$.
\end{lem}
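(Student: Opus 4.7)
The plan is to argue by contradiction. Suppose $M$ contains a directed cycle $q_1 \to q_2 \to \cdots \to q_m \to q_1$ with $m \geq 3$. Each $q_i \in M$ satisfies $q_i^2 \mid\mid n$, so the arc $q_i \to q_{i+1}$ records $q_{i+1} \mid \sigma(q_i^2) = q_i^2 + q_i + 1$, and Lemma \ref{lm32} forces $q_{i+1} = 3$ or $q_{i+1} \equiv 1 \pmod{3}$. The principal quantitative tool is the identity $v_3(\sigma(n)) = 2 v_3(\gamma(n)) \leq 2$, which follows from \eqref{eq10}.

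If $3$ does not appear among the $q_i$, then each $q_i \equiv 1 \pmod{3}$ contributes $v_3(\sigma(q_i^2)) = 1$, yielding $v_3(\sigma(n)) \geq m \geq 3$, a contradiction. Otherwise, after relabeling, $q_1 = 3$; since $\sigma(9) = 13$ is prime, $q_2 = 13$ is forced, and for $i \geq 2$ distinctness forces $q_i \neq 3$ with $q_i \equiv 1 \pmod{3}$, contributing $1$ to $v_3$ each. This gives $m - 1 \leq 2$, so $m = 3$, and since $\sigma(169) = 3 \cdot 61$, the only candidate cycle in $M$ is $3 \to 13 \to 61 \to 3$. Note that this cycle saturates both $v_3(\sigma(n)) = 2$ and $v_{13}(\sigma(n)) = 2$ exactly, so no prime power outside the cycle may contribute to $v_3$ or $v_{13}$.

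To eliminate this candidate I invoke the hypothesis. Because $\sigma(61^2) = 3 \cdot 13 \cdot 97$, the only out-arc of $61$ outside the cycle is $61 \to 97$, and $61 \in M$ must admit a squared path to $L$ that exits through $97$. The option $97 \in M$ is impossible, for then $97^2 \mid\mid n$ with $97 \equiv 1 \pmod{3}$ would give $v_3(\sigma(97^2)) = 1$, breaking saturation; hence $97 \in L$. Since $97 \equiv 1 \pmod{8}$, case (a) (which demands $p_1 \equiv 3 \pmod{8}$) is excluded, placing us in case (b), where $L \subseteq \{p_1, p_2\}$ with $p_1, p_2 \equiv 1 \pmod{4}$.

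The hypothesis now says the primes $2, 7$ dividing $\sigma(97) = 2 \cdot 7^2$ both lie outside $N(L)$, so in particular $7 \notin M$ and hence $e_7 \neq 2$. Combining (i) the saturation of $v_2(\sigma(n)) = 2$, which forces $L$ to contain a second element $p_2' \equiv 1 \pmod{4}$ contributing $v_2(p_2' + 1) = 1$; (ii) the constraint $v_3(\sigma(7^{e_7})) = v_3(e_7 + 1) = 0$ obtained from $v_3$-saturation via lifting-the-exponent; and (iii) the in-degree-$2$ conditions $d_G^-(61) = d_G^-(97) = 2$, each requiring one further incoming arc from outside the cycle, together with the hypothesis that forbids $p_2' + 1$ from containing any prime in $N(L) \supseteq \{3, 13, 61, 97\}$, should eliminate every admissible external contributor via a short case-by-case analysis. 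I expect the main obstacle to be this final bookkeeping step, which must exhaust the admissible exponents of a small number of auxiliary primes (notably $7$) while simultaneously respecting all of the saturation relations and the congruence constraints imposed by the hypothesis.
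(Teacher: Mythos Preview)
Your reduction to the single candidate cycle $3\to 13\to 61\to 3$ in $M$, with $97\in L$, is correct and coincides with the paper's argument.  The subsequent use of the hypothesis to deduce $7\notin N(L)$ (hence $e_7\neq 2$, since $7^2\to 3\in N(L)$ would otherwise place $7$ in $N(L)$) is a valid and pleasant shortcut; the paper obtains $e_7\neq 2$ instead from $v_3$-saturation alone.

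The elimination of the candidate cycle, however, is only sketched, and the sketch has a genuine gap.  Your proposed toolkit---$v_p$-saturation at $p=2,3,13$, the in-degree-$2$ conditions, and the hypothesis---imposes many local congruence constraints, but it contains \emph{no mechanism whatsoever for bounding exponents from above}.  Concretely, after you know $e_7$ is even, $e_7\neq 2$, and $3\nmid e_7+1$, you are still left with infinitely many possibilities $e_7\in\{4,6,10,12,16,18,\dots\}$, and each choice spawns its own set of auxiliary primes dividing $\sigma(7^{e_7})$ with their own unbounded exponents.  The ``short case-by-case analysis'' you anticipate cannot terminate on saturation and in-degree information alone.  The paper closes this gap with the multiplicative function $h(S)=\prod_{p^e\in S}\sigma(p^e)/p^2$ and Lemma~\ref{lm40}: whenever an exponent is large enough one obtains $h(n)>1$, contradicting \eqref{eq10}, and this is what forces the analysis down to finitely many small exponents ($e_7\in\{4,6,8\}$ in the paper's treatment, each then dispatched by explicit factorisation).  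You will need either this $h$-machinery or a genuine substitute.

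A secondary point: your item (i) is not correct as stated.  Being in case~(b) does guarantee a second special prime $p_2\equiv 1\pmod 4$ with $e_2\equiv 1\pmod 4$, but $e_2$ may be $5,9,13,\dots$, so $p_2$ need not lie in $L$.  The paper accordingly treats the cases $\#L=1$ and $\#L=2$ separately throughout the final analysis, and your argument must do the same.
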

\begin{proof}
Assume that $q_i (i=1, 2, \ldots, l)$ is a cycle of length $l\geq 3$.
We see that $q_i\equiv 1\mathmod{3}$ for all $i$ except possibly one index $j$, for which $q_j=3$.
We must have $l=3$ and $q_j=3$ for some $j$ since otherwise we must have
$q_i\equiv 1\mathmod{3}$ for at least three $i$'s by Lemma \ref{lm31}
and $3^3\mid \prod_j \sigma(q_i^2)\mid n$, which is a contradiction.

Now we see that $3^2\rightarrow 13^2\rightarrow 61^2\rightarrow 3^2$ is a cycle in $M$ and $p_1=97\in L$.
Hence, $97\rightarrow 7^e$ must divide $n$ and,
observing that no more prime $p_i\equiv 1\mathmod{3}$ can satisfy $p_i^2\mid\mid N$ again, $e\geq 4$ must be even.
Moreover, we must have $e_0\geq 2$ since $3^3\mid \sigma(2\times 13^2\times 61^2)$.

If $L=\{p_1\}$ and $7^6$ divides $n$, then
\[h(n)\geq h(7^6)h(C(L))>h(\{3^2, 13^2, 61^2, 97, 7^6\})>1,\]
which is a contradiction.

If $L=\{p_1, p_2\}$ and $7^{10}$ divides $n$, then, since $N(p_2)$ is acyclic, Lemma \ref{lm40} gives
\begin{equation}
\begin{split}
h(n)\geq & h(\{3^2, 13^2, 61^2, 97\}) h(C(p_2)\cup \{7^{10}\}) \\
> & 7^3 h(\{3^2, 13^2, 61^2, 97\})>1.
\end{split}
\end{equation}

Now we must have $e=4, 6$ or $8$.
We can never have $97\rightarrow 7^8$ since $3^3\mid \sigma(13^2\times 61^2\times 7^8)$.
In both cases $e=4$ and $e=6$, we have a contradiction that $p^3\mid\sigma(n)=(\gamma(n))^2$ for some prime $p$
or $h(n)>1$ as follows:
\begin{itemize}
\item[A.] If $97\rightarrow 7^6$, then $\sigma(7^6)=29\times 4733$ divides $\sigma(n)$.
Moreover, we have $L=\{p_1, p_2\}$ with $7^6\in B(p_2)$ or $L=\{p_1, p_2\}$ with $7^6\not\in B(p_2)$
since it is impossible that $L=\{p_1\}$ and $7^6\mid n$ as seen above.
\item[A1.] If $7^6\in B(p_2)$, then $p_2=29$ or $4733$ or $7^6\rightarrow p_3^2\in N(p_2)$ with $p_3=29$ or $4733$.
\item[A1a.] If $p_2=29$ or $4733$, then $3^3\mid \sigma(13^2\times 61^2\times p_2)$.
\item[A1b.] We cannot have $7^6\rightarrow 29^2$ since $13^3\mid \sigma(3^2\times 61^2\times 29^2)$.
\item[A1c.] If $7^6\rightarrow 4733^2$ and $4733^2\in N(p_2)$, then $p_2=22406023$ or
$4733^2\rightarrow 22406023^2$.
Since $22406023\equiv 3\mathmod{4}$, we must have $4733^2\rightarrow 22406023^2$ and
therefore $3^3\mid \sigma(13^2\times 61^2\times 22406023^2)$.
\item[A2.] If $L=\{p_1, p_2\}$ and $7^6\not\in B(p_2)$, then, $N(p_2)$ has no cycle by the assumption
and Lemma \ref{lm40} gives
\[h(n)\geq h(\{97, 7^6\})h(C(p_2))>h(\{97, 7^6\})>1.\]
\item[B.] If $97\rightarrow 7^4$, then $7^4\rightarrow 2801^f$ for some integer $f>0$.
\item[B1.] If $f\equiv 1\mathmod{4}$, then $3^3\mid \sigma(13^2\times 61^2\times 2801)\mid \sigma(n)$.
\item[B2.] If $f\geq 6$ and $L=\{p_1, p_2\}$, then
\[h(n)\geq h(\{97, 7^4, 2801^f\}\cup C(p_2))>2801h(\{97, 7^4\})>1.\]
\item[B3.] If $f\geq 4$ and $L=\{p_1\}$, then
\[h(n)\geq h(\{97, 7^4, 2801^f\})>2801h(\{97, 7^4\})>1.\]
\item[B4.] If $f=4$ and $2801^4\not\in B(p_2)$, then
\[h(n)\geq h(\{97, 7^4, 2801^4\}\cup C(p_2))>h(\{97, 7^4, 2801^4\})>1.\]
\item[B5.] If $f=4, L=\{p_1, p_2\}$ and $2801^4\in B(p_2)$, then $q\in N(p_2)$ with $q=5$, $1956611$ or $6294091$.
\item[B5a.] If $p_2=5$, then $3^3\mid\sigma(13^2\times 61^2\times p_2)\mid \sigma(n)$.
\item[B5b.] If $5^2\in N(p_2)$, then $5^2\rightarrow 31^2$ but
$3^3\mid\sigma(13^2\times 61^2\times 31^2)\mid \sigma(n)$.
\item[B5c.] We cannot have $6294091^2\in N(p_2)$ since $3^3\mid\sigma(13^2\times 61^2\times 6294091^2)$.
\item[B5d.] If $1956611^2\in N(p_2)$, then
$\sigma(1956611^2)=277\times 95479\times 144751$.
If $1956611^2\rightarrow p_3^2$ with $p_3=277$, $95479$ or $144751$,
then $3^3\mid \sigma(13^2\times 61^2\times p_3^2)\mid \sigma(n)$.
If $p_2=277$, then $h(n)\geq h(\{97, 7^4, 2801^4, 277\})>1$.
\item[B6.] If $f=2$, then $\sigma(2801^2)=37\times 43\times 4933$ divides $\sigma(n)$.
\item[B6a.] We cannot have $2801^2\rightarrow 43^2$ since $3^3\mid\sigma(13^2\times 61^2\times 43^2)$.
\item[B6b.] If $2801^2\rightarrow 43^{e_3}, e_3\geq 6$, then
\[h(n)\geq h(\{97, 7^4, 43^{e_3}\}\cup C(p_2))>43 h(\{97, 7^4\})>1.\]
\item[B6c.] If $2801^2\rightarrow 43^4$ and $43^4\not\in B(p_2)$, then
\[h(n)\geq h(\{97, 7^4, 43^4\}\cup C(p_2))>h(\{97, 7^4, 43^4\})>1.\]
\item[B6d.] If $2801^2\rightarrow 43^4$ and $43^4 \in B(p_2)$, then $43^4\rightarrow 3500201^1$ or
$43^4\rightarrow 3500202^2$.
If $43^4\rightarrow 3500201^1$, then $3^3\mid \sigma(13^2\times 61^2\times 3500201)\mid\sigma(n)$.
If $43^4\rightarrow 3500201^2$, then 
$\sigma(3500201^2)$ $=13$ $\times 139$ $\times 28411$ $\times 238639$.
Since $q\equiv 3\mathmod{4}$ for $q=139$, $28411$ and $238639$, $q^2\in N(q_2)$ and
$3^3\mid \sigma(13^2\times 61^2\times q^2)\mid \sigma(n)$.
\end{itemize}

Thus we have a contradiction in any case.
This yields that $3^2\rightarrow 13^2\rightarrow 61^2\rightarrow 97$ is impossible.
Hence, we conclude that $M=M(L)$ cannot contain a cycle of length $\geq 3$, as stated in the lemma.
\end{proof}

Now a cycle in $M(L)$ must be of the form $p_i^2\leftrightarrow p_j^2$.
We may assume that $p_r^2\leftrightarrow p_{r+1}^2$ for some $r$.
In other words, we must have $p_r\mid\sigma(p_{r+1}^2)$ and $p_{r+1}\mid\sigma(p_r^2)$
for some primes $p_r, p_{r+1}\in M(L)$.

Lemma 2.6 of \cite{CT} shows that such $p_r, p_{r+1}$ must be two consecutive terms
of the binary recurrent sequence described in A101368 of OEIS.
This had already been proved by Mills \cite{Mil} and Chao \cite{Cha}.
However, this fact is not needed in our argument.
We only use the fact that, if $p_{r+1}>p_r>3$ and $p_r\leftrightarrow p_{r+1}$,
then $p_r\equiv p_{r+1}\equiv 1\mathmod{3}$ by Lemma \ref{lm32}.

We begin by proving that, we cannot have $p_r\leftrightarrow p_{r+1}$ if $p_{r+1}>p_r>3$.

\begin{lem}\label{lm52}
Assume that for there exists no arc $p_i\rightarrow p_j$ from $p_i\in L$ to $p_j\in N(L)$.
If $M=M(L)$ contains a cycle $p_r^2\leftrightarrow p_{r+1}^2$ of length two with $p_{r+1}>p_r$,
then $(p_r, p_{r+1})=(3, 13)$.
\end{lem}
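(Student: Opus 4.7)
The plan is to suppose for contradiction that $p_{r+1} > p_r > 3$ and to derive a $3$-adic contradiction in the same style as Lemma \ref{lm51}: either a third prime $\equiv 1\pmod 3$ must appear squared in $n$, producing $3^3 \mid \sigma(n) = (\gamma(n))^2$ (impossible since $\gamma(n)$ is squarefree), or some large prime power forces $h(n)>1$ via Lemma \ref{lm40}.

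First I would apply Lemma \ref{lm32} to the divisibilities $p_r \mid p_{r+1}^2 + p_{r+1} + 1$ and $p_{r+1} \mid p_r^2 + p_r + 1$; since $p_r, p_{r+1} > 3$ this yields $p_r \equiv p_{r+1} \equiv 1 \pmod 3$. The identity $(3k+1)^2 + (3k+1) + 1 = 3(3k^2 + 3k + 1)$, with the second factor coprime to $3$, then gives $v_3(\sigma(p_r^2)) = v_3(\sigma(p_{r+1}^2)) = 1$, so $v_3(\sigma(n)) \geq 2$; because $\gamma(n)$ is squarefree this forces $v_3(\sigma(n)) = 2$, $3 \mid n$, and the complete exclusion of any further prime $q \equiv 1 \pmod 3$ with $q^2 \mid\mid n$. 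Next I would extract a new prime from the factorizations $\sigma(p_r^2) = 3 p_{r+1}^a A$ and $\sigma(p_{r+1}^2) = 3 p_r^b B$, where $\gcd(AB, 3p_r p_{r+1}) = 1$ and $a, b \leq 2$ (from $v_{p_r}(\sigma(n)) = v_{p_{r+1}}(\sigma(n)) = 2$). The size bound $\sigma(p_{r+1}^2) > p_r^2 > 3 p_r$ excludes $(b,B) = (1,1)$, and the Diophantine equation $3 p_r^2 = p_{r+1}^2 + p_{r+1} + 1$ corresponding to $(b,B) = (2,1)$ is handled by an elementary growth or mod-$p_r$ argument. Hence $B > 1$, yielding a prime $q$ with $q \mid n$ and $q \notin \{3, p_r, p_{r+1}\}$.

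The main obstacle is the case analysis on $e_q$, parallel to that in Lemma \ref{lm51}. The subcase $q \equiv 1 \pmod 3$ with $q^2 \mid\mid n$ is immediate from the previous paragraph; when $q \in L$, the standing hypothesis that no arc from $L$ enters $N(L)$, together with a check on $\sigma(q) = q+1$, handles the remaining subcases (a further $3$-adic accounting disposes of $q \equiv 2 \pmod 3$); and when $q^{e_q} \mid\mid n$ with $e_q \geq 4$ even, Lemma \ref{lm40} applied to an enlarged set such as $\{2, 3^{e_0'}, p_r^2, p_{r+1}^2, q^{e_q}\}$ forces $h(n)>1$. The most delicate concrete pair is $(p_r, p_{r+1}) = (13, 61)$, for which $\sigma(61^2) = 3 \cdot 13 \cdot 97$ makes $q = 97$ unavoidable; each branch ($97 \in L$, invoking $\sigma(97) = 2 \cdot 7^2$ and the $L$-hypothesis to force a high even exponent on $7$ and then $h(n)>1$; or $97^2 \mid\mid n$, giving $v_3(\sigma(97^2)) = 1$ and thus $v_3(\sigma(n)) \geq 3$; or $e_{97} \geq 4$ yielding $h(n)>1$) will need to be closed out one by one in the style of cases A--B6d of Lemma \ref{lm51}.
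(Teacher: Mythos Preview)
Your plan has a genuine gap: you repeatedly invoke Lemma \ref{lm40}, but that lemma requires $N(L)$ to be acyclic, and here $N(L)$ contains the very $2$-cycle $p_r\leftrightarrow p_{r+1}$ you are analysing. Nor can you fall back on a direct $h$-estimate of a set like $\{2,3^{e_0'},p_r^2,p_{r+1}^2,q^{e_q}\}$: the inequality $h(n)\geq h(S)$ (via Lemma \ref{lm31}) only holds when the complementary prime powers all have exponent $\geq 2$, so $S$ must contain every element of $L$ (and $2$ if $e_0=1$). Your sets omit $L$, and once you adjoin the small factors $h(p_i)=(p_i+1)/p_i^2$ for $p_i\in L$ the product typically drops below $1$. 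Concretely, in your worked pair $(13,61)$ with $97\in L$ and $e_7=4$, one computes $h(\{2,3^2,13^2,61^2,97,7^4\})<1$, so that branch is not closed; the unknown second prime $p_2\in L$ and its $2$-incomponent must be controlled simultaneously, which your outline does not do. The side claim that the Diophantine equation $3p_r^2=p_{r+1}^2+p_{r+1}+1$ is ``handled by an elementary growth or mod-$p_r$ argument'' is also unsubstantiated.

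The paper avoids all of this by a single uniform device: delete from $C(L)$ the two arcs $p_r\leftrightarrow p_{r+1}$ together with the arc(s) from $\{p_r,p_{r+1}\}$ into $L$, obtaining an acyclic multigraph $F$ in which $p_r$ and $p_{r+1}$ become new sinks with in-degree $1$. Rerunning the Lemma \ref{lm40} computation with these modified in-degrees (so $s_j=t_j$ by Lemma \ref{lm21}, with $t_j=\tfrac12$ at $p_r,p_{r+1}$ and $t_j=(2-f_i)/2$ at $p_i\in L$) yields
\[
h(C)>\sqrt{\frac{p_r p_{r+1}}{p_1^{f_1}p_2^{f_2}}},
\]
where $p_1^{f_1}p_2^{f_2}$ divides the $L$-part of $\sigma(p_r^2p_{r+1}^2)$. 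For $p_r>3$ one has $p_r\equiv p_{r+1}\equiv 1\pmod 3$, hence $9p_rp_{r+1}\mid\sigma(p_r^2p_{r+1}^2)$ and $p_1^{f_1}p_2^{f_2}\leq\sigma(p_r^2p_{r+1}^2)/(9p_rp_{r+1})$; this gives $h(C)>1$ in one stroke for \emph{every} pair with $p_r>3$, with no case analysis on an auxiliary prime $q$ at all. The missing idea in your proposal is precisely this cycle-breaking modification of the graph that lets the path-counting identity of Lemma \ref{lm21} absorb the two primes of $L$ into a clean inequality.
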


\begin{proof}
We may assume that $p_r, p_{r+1}\in N(p_1)$.
Hence, there exists a vertex $q\in N(p_1)$ such that $p_r\rightarrow q$ or $p_{r+1}\rightarrow q$.
However, if $q\in M$, then, since $q\equiv p_{r+1}\equiv 1\mathmod{3}$,
we must have $3^3\mid \sigma(q^2 p_r^2 p_{r+1}^2)\mid \sigma(n)$, which is a contradiction.
Thus, we must have $q\in L$.

Now we obtain a directed multigraph $F$ by eliminating the arcs
$p_r\leftrightarrow p_{r+1}$ and $p_r$ or $p_{r+1}\rightarrow p_i$ with $p_i\in L$ from $C=C(L)$.
Then $F$ has two more sinks $p_r, p_{r+1}$ as well as sinks in $C(L)$.

Proceeding as in the proof of Lemma \ref{lm40}, we have
\begin{equation}
\prod_{p_i\in B=B(L)} \sigma(p_i^{e_i})=\left(\prod_{p_i\in B}\kappa_i\right)\prod_{p_j\in M, j\neq r, r+1}\lm_j^{s_j}
\prod_{p_i\in L\cup \{p_r, p_{r+1}\}}p_j^{2s_j},
\end{equation}
where
\begin{equation}
s_j=\sum_{\substack{q_0\rightarrow q_1\rightarrow \cdots \rightarrow q_k=p_j, \\
q_0\in B, q_1, \ldots, q_{k-1}\in N}} \frac{1}{2^k}.
\end{equation}

Let $f_i$ be the exponent $p_i^{f_i}\mid\mid \sigma(p_r^2 p_{r+1}^2)$ for $p_i\in L$.
We observe that $d_F^-(p_i)=2-f_i$ for $p_i\in L$, $d_F^-(p_r)=d_F^-(p_{r+1})=1$ and
$d_F^-(p_j)=2$ for any other vertex $p_j$ in $N$.
Hence,
\begin{equation}
s_j=t_j\sum_{\substack{q_0\rightarrow q_1\rightarrow \cdots \rightarrow q_k=p_j, \\
q_0\in B, q_1, \ldots, q_{k-1}\in N}} \frac{1}{\prod_{l=1}^k d_F^-(q_l)},
\end{equation}
where $t_j=(2-f_i)/2$ for $p_j\in L$, $1/2$ for $j=r, r+1$ and
$t_j=1$ for any other $j$ such that $p_j\in N$.
By Lemma \ref{lm21}, we have $s_j=t_j$ for any $j$ such that $p_j\in N$ and, as in Lemma \ref{lm40},
\begin{equation}
h(C)>\prod_{p_i\in B}\kappa_i^\frac{1}{2} p_i^{\frac{e_i}{2}-2}
\prod_{p_j\in M, j\neq r, r+1}\frac{\sqrt{\sigma(p_j^2)}}{p_j} \frac{p_r^\frac{1}{2}p_{r+1}^\frac{1}{2}}{p_1^\frac{f_1}{2} p_2^\frac{f_2}{2}}
>\sqrt{\frac{p_r p_{r+1}}{p_1^{f_1} p_2^{f_2}}}.
\end{equation}
If $p_r>3$, then we have $p_r\equiv p_{r+1}\equiv 1\mathmod{3}$
and $p_1^{f_1} p_2^{f_2}\leq$ \\ $\sigma(p_r^2 p_{r+1}^2)/(9p_r p_{r+1})$.
Moreover, we observe that $e_0\geq 2$ since $3^3\mid \sigma(2p_r^2 p_{r+1}^2)$.
Hence, we must have
\begin{equation}
h(n)\geq h(C)>\frac{3p_r p_{r+1}}{\sqrt{\sigma(p_r^2 p_{r+1}^2)}}>1,
\end{equation}
which is a contradiction.
Thus, we must have $(p_r, p_{r+1})=(3, 13)$.
\end{proof}

Now the only remaining case is $3^2\leftrightarrow 13^2 \rightarrow 61^1$.

\begin{lem}\label{lm53}
Assume that there exists no arc $p_i\rightarrow p_j$ from $p_i\in L$ to $p_j\in N=N(L)$.
Then, $3^2\leftrightarrow 13^2 \rightarrow 61^1$ is impossible.
\end{lem}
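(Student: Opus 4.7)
My strategy is to assume the configuration $3^2\leftrightarrow 13^2\rightarrow 61$ occurs and to trace the resulting in-degree requirements to a contradiction. Since $61\equiv 1\pmod 4$ and $61\in L$ forces $p_1=61$, case (b) of \eqref{eq11} applies with a second distinguished odd prime $p_2\equiv 1\pmod 4$. By the reductions of Lemma~\ref{lm41}, I may assume $e_0=1$, whence $\sigma(2)=3\in N(L)$ places $2\in B(L)$. Because $\sigma(n)=(\gamma(n))^2$, every prime divisor $p$ of $n$ satisfies $d_G^-(p)=2$. The arcs coming from $\sigma(2)=3$, $\sigma(3^2)=13$, and $\sigma(13^2)=3\cdot 61$ already supply $d_G^-(3)=2$, but $13$ and $61$ are each short by one unit. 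Hence $n$ must possess prime-power divisors $q^{e_q}$ and $r^{e_r}$ with $13\mid\mid\sigma(q^{e_q})$ and $61\mid\mid\sigma(r^{e_r})$, and likewise additional contributors are required for the second occurrence of $2$ and of $31$ arising from $\sigma(61)=62$.

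I would then split into cases according to the roles of $q$ and $r$. If $q^2\mid\mid n$, so that $q\in M(L)$ with $q^2\rightarrow 13$, then $\sigma(q^2)=q^2+q+1$ introduces a prime factor distinct from $3$ and $13$ which, together with $q$, either produces a cycle of length $\geq 3$ in $M(L)$ (forbidden by Lemma~\ref{lm51}), a second length-two cycle in $M(L)$ distinct from $(3,13)$ (forbidden by Lemma~\ref{lm52}), or a third prime $\equiv 1\pmod 3$ squared in $N(L)$, making $3^3\mid\sigma(n)=(\gamma(n))^2$ by Lemma~\ref{lm32}, an impossibility. Parallel reasoning handles $r^2\mid\mid n$. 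Otherwise $q$ and $r$ lie in $B(L)$, or $r$ coincides with $p_2$ having $e_2\geq 5$. In these subcases, adapting the proof of Lemma~\ref{lm52} to the cycle $3^2\leftrightarrow 13^2$ should yield a bound of the shape
\[
h(C)>\sqrt{\frac{3\cdot 13}{61^{f_1}p_2^{f_2}}}\cdot\prod_{p_i\in B}\kappa_i^{1/2}p_i^{e_i/2-2}\cdot\prod_{p_i\in L}p_i^{e_i-1},
\]
with $f_1=1$ and $f_2=0$, since $\sigma(9\cdot 169)=3\cdot 13\cdot 61$. I would then fold in the boundary contributions from $2$, $q^{e_q}$, $r^{e_r}$, and the prime $31$ (together with any further primes forced by $\sigma(61)$) to drive $h(n)$ above $1$, contradicting \eqref{eq10}.

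The main obstacle I expect is the subcase $r=p_2$ with $e_2\geq 5$ supplying the missing factor of $61$: such an $r$ lies in $L$ rather than in $B$, so Lemma~\ref{lm40}'s boundary contribution is unavailable and the order condition $\mathrm{ord}_{61}(p_2)\mid e_2+1$ permits infinitely many candidates. Here I expect to peel off further prime factors of $\sigma(p_2^{e_2})$, follow their in-degree cascade, and apply Lemmas~\ref{lm32}, \ref{lm51}, and \ref{lm52} repeatedly until either three primes $\equiv 1\pmod 3$ accumulate with exponent two in $N(L)$ (forcing $3^3\mid\sigma(n)$) or the accumulated factor $\prod p_i^{e_i/2-2}$ pushes $h(n)$ past $1$, in the style of parts A and B of Lemma~\ref{lm51}. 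A secondary technical concern is that the modification of Lemma~\ref{lm40} in the presence of the $3\leftrightarrow 13$ cycle must mimic the device from Lemma~\ref{lm52}, namely forming an auxiliary graph $F$ by deleting the cycle arcs and declaring $3$ and $13$ to be additional sinks, so that the identity of Lemma~\ref{lm21} continues to apply.
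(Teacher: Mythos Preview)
Your plan diverges from the paper's route and contains a real gap.

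The gap: you appeal to Lemma~\ref{lm41} to reduce to $e_0=1$, but that lemma's conclusion is merely that $N(L)$ has a cycle (the alternative $p_1\in B(p_2)$ being excluded by the present hypothesis). Here $N(L)$ already contains the cycle $3^2\leftrightarrow 13^2$, so Lemma~\ref{lm41} is vacuous and tells you nothing about $e_0$. Without $e_0=1$ you lose $\sigma(2)=3$, and your in-degree accounting for $3$ collapses. Moreover, the inequality you display gives only $h(C)>\sqrt{39/61}<1$; you then rely on unspecified boundary primes $q,r$ to lift this above $1$, but for a generic $q\in B$ with $e_q=4$ the factor $\kappa_i^{1/2}p_i^{e_i/2-2}$ is just $\kappa_i^{1/2}\geq 1$, which need not suffice. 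Your sketch never pins down $q$ or $r$, so the case analysis you promise cannot be carried out in finitely many steps.

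The paper's argument is structurally different and much shorter: rather than chasing \emph{incoming} arcs to $13$ and $61$, it follows the single \emph{outgoing} arc $61\rightarrow 31^{e_3}$ arising from $\sigma(61)=2\cdot 31$. Splitting on $e_3$, Lemma~\ref{lm40} applied to $C(p_2)$ (which is still acyclic) together with the explicit factor $h(\{2,61,31^{e_3}\})$ forces $h(n)>1$ unless $e_3=2$, or $e_3\in\{4,6\}$ with $31^{e_3}\in B(p_2)$. Those finitely many residual cases are dispatched by concrete factorizations of $\sigma(31^2)$, $\sigma(31^4)$, $\sigma(31^6)$ and their successors, exactly in the style of parts A and B of Lemma~\ref{lm51}: each branch either produces a third prime $\equiv 1\pmod 3$ with exponent two (whence $3^3\mid\sigma(n)$) or drives $h(n)>1$. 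The advantage is that $31$ is explicit, so all subsequent primes are computable; your incoming-arc primes $q,r$ are not.
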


\begin{proof}
Assume that $3^2\leftrightarrow 13^2 \rightarrow 61^1$.
Then we immediately have $L=\{61\}$ or $L=\{61, p_2\}$ with $p_2\equiv 1\mathmod{4}$.
It is also clear that $61^1\rightarrow 31^{e_3}$.

If $e_3\geq 8$, then Lemma \ref{lm40} gives
\[h(n)\geq h(\{2, 3^2, 13^2, 61\}) h(C(p_2)\cup \{31^{e_3}\})>31^2 h(\{2, 61\})>1.\]
If $e_3\geq 4$ and $L=\{p_1\}$, then clearly we have
\[h(n)\geq h(\{2, 3^2, 13^2, 61, 31^{e_3}\})\geq h(\{2, 61, 31^4\})>1.\]
If $e_3\geq 4$, $L=\{p_1, p_2\}$ and $31^{e_3}\not\in B(p_2)$, then Lemma \ref{lm40} gives
\[h(n)\geq h(\{2, 3^2, 13^2, 61, 31^{e_3}\}) h(C(p_2))>h(\{2, 61, 31^4\})>1.\]
Thus, in these three cases, we are led to $h(n)>1$, which is a contradiction.
Hence, we must have (I) $L=\{p_1, p_2\}$, $e_3\in \{4, 6\}$ and $31^{e_3}\in B(p_2)$ or (II) $e_3=2$.
In both cases (I) and (II), we have a contradiction that $p^3\mid\sigma(n)=(\gamma(n))^2$ for some prime $p$
or $h(n)>1$ as follows:
\begin{itemize}
\item[I. A.] If $31^6\in B(p_2)$, then $p_2=917087137$ or $917087137^2\in N(p_2)$.
\item[I. A1.] In the case $p_2=917087137$, we observe that $p_4^{e_4}\rightarrow p_2$ for a prime $p_4\neq 31$.
\item[I. A1a.] If $e_4=2$, then $p_4\geq 20612597323$ and, since
$3^2, 13^2, 61^1, 31^6, p_2^1\not\in C(p_4)$
(we observe that $p_2^1\in C(p_4)$ implies that $N(p_2)$ must contain a cycle
$p_2\rightarrow \cdots \rightarrow p_4^2\rightarrow p_2$),
Lemma \ref{lm40} yields that
\[h(n)\geq h(\{2, 61, 31^6, p_2\})h(C(p_4))>p_4 h(\{2, 61, 31^6, p_2\})>1.\]
\item[I. A1b.] If $e_4>2$, then $h(C(p_2))>31p_4>31^2$ by Lemma \ref{lm40} and therefore
\[h(n)\geq h(\{2, 61\})h(C(p_2))>31^2h(\{2, 61\})>1.\]
\item[I. A. 2.] If $917087137^2\in N(p_2)$, then, since any prime factor of $\sigma(p_4^2)$
is $\equiv 3\mathmod{4}$, we must have $p_4^2\rightarrow p_5^2$ with $p_5=43, 4447, 38647$ or $38533987$,
which is impossible since $3^3\mid \sigma(13^2 p_4^2 p_5^2)$.
\item[I. B.] If $31^4\in B(p_2)$, then one of $5, 5^2, 11^2, 17351^2$ must belong to $N(p_2)$.
\item[I. B1.] If $p_2=5$, then $h(n)\geq h(\{2, 61, 31^4, 5\})>1$, a contradiction.
\item[I. B2.] We cannot have $5^2\in N(p_2)$ since $\sigma(5^2)=31\in B(p_2)$.
\item[I. B3.] If $11^2\in N(p_2)$, then $7^2\in N(p_2)$ or $19^2\in N(p_2)$.
Since $\sigma(7^2)=3\times 19$, we have $19^2\in N(p_2)$ in any case.
Now we must have $19^2\rightarrow 127^2\in N(p_2)$.
Thus, $3^3\mid\sigma(13^2\times 19^2\times 127^2)\mid\sigma(n)$, a contradiction.
\item[I. B4.] If $17351^2\in N(p_2)$, then $1063^2\in N(p_2)$ or $21787^2\in N(p_2)$.
\item[I. B4a.] If $1063^2\in N(p_2)$, then we must have $1063^2\rightarrow 377011^2\in N(p_2)$
and $3^3\mid\sigma(13^2\times 1063^2\times 377011^2)$, which is a contradiction.
\item[I. B4b.] If $21787^2\in N(p_2)$, then $p_2=5104249$ or $5104249^2\in N(p_2)$.
Neither of them is possible since $5^3\mid (5104249+1)$ and $3^3\mid\sigma(13^2\times 21787^2\times 5104249^2)$.
\item[II.] If $61\rightarrow 31^2$, then we must have $31^2\rightarrow 331^{e_3}$ for some $e_3$.
Since $331\equiv 3\mathmod{4}$, $p_2\neq 331$ and $e_3$ must be even.
\item[II. 1.] $e_3=2$ is impossible since $3^3\mid\sigma(13^2\times 31^2\times 331^2)$.
\item[II. 2.] If $e_3\geq 6$, $L=\{p_1, p_2\}$ and $331^{e_3}\in B(p_2)$, then Lemma \ref{lm40} gives
\[h(n)\geq h(\{2, 61\}) h(C(p_2)\cup \{331^{e_3}\})>331 h(\{2, 61\})>1.\]
\item[II. 3.] If $e_3\geq 4$ and $L=\{p_1\}$, then Lemma \ref{lm40} gives
\[h(n)\geq h(\{2, 61, 331^{e_3}\})\geq h(\{2, 61, 331^4\})>1.\]
\item[II. 4.] If $e_3\geq 4$, $L=\{p_1, p_2\}$ and $331^{e_3}\not\in B(p_2)$, then Lemma \ref{lm40} gives
\[h(n)\geq h(\{2, 61, 331^{e_3}\}) h(C(p_2))>h(\{2, 61, 331^4\})>1.\]
\item[II. 5.] If $e_3=4$, $L=\{p_1, p_2\}$ and $331^4\in B(p_2)$,
then $p_2=5, 37861, 63601$ or $331^4\rightarrow p_4^2\in N(p_2)$
with $p_4=37861$ or $63601$. (we see that since $\sigma(5^2)=31$, we cannot have $5^2\in N(p_2)$).
\item[II. 5a.] $331^4\rightarrow p_4^2\in N(p_2)$ is impossible since $3^3\mid\sigma(13^2\times 31^2\times p_4^2)$.
\item[II. 5b.] Assume that $p_2=5$, $37861$ or $63601$.
Since $L=\{p_1, p_2\}$ with $p_1=61$, we must have $37861\not\in L$ or $63601\not\in L$.
Thus, we see that $331^4\rightarrow p_4^{e_4}$ with $p_4=37861$ or $63601$ and $e_4\geq 4$.
Hence,
\[h(n)\geq h(\{2, 61, 331^4, p_2, p_4^{e_4}\})\geq h(\{2, 61, 331^4, 63601, 37861^4\})>1,\]
a contradiction again.
\end{itemize}

Thus we have a contradiction in any case.
This shows that $3^2\leftrightarrow 13^2 \rightarrow 61$ is impossible, as desired.
\end{proof}

Now we can easily prove Theorem \ref{th11}.
Let $n$ be an integer of the form \eqref{eq11} satisfying \eqref{eq10}
and $L$ be the set of odd primes $p_i$ such that $p_i\mid\mid n$.
If there exists no path between two vertices in $L$,
then, by Lemmas \ref{lm41}, \ref{lm42} and \ref{lm43}, $N(L)$ must have a cycle
but, by Lemmas \ref{lm51}, \ref{lm52} and \ref{lm53}, $M(L)$ cannot have a cycle.
Hence, $G(n)$ must have a path between two vertices in $L$ or a cycle in $N(L)$ containing a vertex in $L$.
This proves iii) of Theorem \ref{th12} and therefore Theorem \ref{th11}.

The remaining statements of Theorem \ref{th12} can be easily deduced from Lemma \ref{lm32}.
Let $g_1$ and $g_2$ be the number of primes $\equiv 1\mathmod{3}$ and $\nequiv 1\mathmod{3}$ in $M$
respectively.
i) and the former statement of ii) immediately follow from Lemma \ref{lm32}
and the fact that $3^3\nmid (\gamma(n))^2=\sigma(n)$.
Thus, $g_1\leq 2$.
If $p_i$ is a prime $\nequiv 1\mathmod{3}$ in $M$,
then $p_i^2\rightarrow p_j^2$ for some prime $p_j\equiv 1\mathmod{3}$ in $M$
or $p_i^2\rightarrow p_l$ for some prime $p_l\in L$.
Hence, we obtain $g_1+g_2\leq 2(g_1+\# L)$ and $g_2\leq g_1+2\# L\leq 2(1+\# L)$.
Now the latter statement of ii) follows.
This completes the proof of our theorems.

{}
\vskip 12pt

\begin{thebibliography}{}
\bibitem{BJG}
J{\o}rgen Bang-Jensen and Gregory Z. Gutin,
\textit{Digraphs, theory, algorithms and applications}, Second edition, Springer, 2009.

\bibitem{BDZ}
Kevin Broughan, Daniel Delbourgo and Qizhi Zhou,
A conjecture of De Koninck regarding particular square values of the sum of divisors function,
\textit{J. Number Theory} \textbf{137} (2014), 50--66,
Corrigendum, \textit{ibid.} \textbf{180} (2017), 790--792.

\bibitem{BKKL}
Kevin A. Broughan, Jean-Marie De Koninck, Imre K\'{a}tai and Florian Luca,
On integers for which the sum of divisors is the square of the squarefree core,
\textit{J. Integer Seq.} \textbf{15} (2012), Article 12.7.5.

\bibitem{Cha}
W. W. Chao, Problem 2981, \textit{Crux Math.} \textbf{30} (2004), 430,
solved by Charles R. Diminnie and others, \textit{ibid.} \textbf{31} (2005), 474--475.

\bibitem{CT}
Yong-Gao Chen and Xin Tong, On the conjecture of de Koninck,
\textit{J. Number Theory} \textbf{154} (2015), 324--364.

\bibitem{Kon}
Jean-Marie De Koninck, Problem 10966(b), \textit{Amer. Math. Monthly} \textbf{109} (2002), 759,
the editorial comment, \textit{ibid.} \textbf{111} (2004), 536.

\bibitem{Nag}
Trygve Nagell, Introduction to number theory, Chelsea, reprinted version, 2001.

\bibitem{Luc}
Florian Luca, On numbers $n$ for which the prime factors $\sigma(n)$ are among the prime factors of $n$,
\textit{Result. Math.} \textbf{45} (2004), 79--87.

\bibitem{Mil}
W. H. Mills, A system of quadratic diophantine equations,
\textit{Pacific J. Math.} \textbf{3} (1953), 209--220.

\bibitem{TZ}
Min Tang and Zhi-Jun Zhou, On a conjecture of De Koninck,
\textit{Integers} \textbf{18} (2018), A60.

\end{thebibliography}
\end{document}